\pdfoutput=1
\documentclass{louloupart1}
\usepackage{mathtools}
\usepackage{graphicx}
\usepackage{hyperref}
\usepackage{bm}
\usepackage{tikz}
\usepackage{titlesec}
\usepackage{centernot}

\title[Property $R_\infty$ for generalized Higman groups]
{Property $R_\infty$ for generalized Higman groups}
\author[I Soroko]{Ignat Soroko}
\givenname{Ignat}
\surname{Soroko}
\address{Ignat Soroko, Department of Mathematics, Southern Methodist University, Dallas, TX 75205, USA}
\email{isoroko@smu.edu}
\author[N Vaskou]{Nicolas Vaskou}
\givenname{Nicolas}
\surname{Vaskou}
\address{Nicolas Vaskou, Section de math\'ematiques, Universit\'e de Gen\`eve, Rue du Conseil-G\'en\'eral 7--9 1205, Gen\`eve, Switzerland}
\email{nicolas.vaskou@unige.ch}
\date{\today}

\newtheorem{thm}{Theorem} 
\newtheorem{lem}[thm]{Lemma}
\newtheorem*{delzantlem}{Delzant's Lemma}
\newtheorem*{martincorg}{Martin's Corollary G}
\newtheorem*{ff-criterion}{Corollary~\ref*{cor:aut-ah-implies-rinfty}}
\newtheorem*{ah-corollary}{Corollary~\ref*{cor:ah}}
\newtheorem{prop}[thm]{Proposition}
\newtheorem{corl}[thm]{Corollary}

\newtheorem{quest}{Question}

\theoremstyle{definition}

\newtheorem{rem}[thm]{Remark}

\newtheorem*{acknow}{Acknowledgments}

\titleformat{\subsection}
  {\normalfont\normalsize\bfseries}
  {\thesubsection}{1em}{}

\begin{document}

\def\N{\mathbb N} \def\Inn{{\rm Inn}} \def\Out{{\rm Out}} \def\Z{\mathbb Z}
\def\id{{\rm id}} \def\supp{{\rm supp}} 
\renewcommand{\Im}{\operatorname{Im}} 
\def\Ker{{\rm Ker}} \def\PP{\mathcal P} \def\Homeo{{\rm Homeo}}
\def\SHomeo{{\rm SHomeo}} \def\LHomeo{{\rm LHomeo}}
\def\MM{\mathcal M} \def\CC{\mathcal C} \def\AA{\mathcal A}
\def\S{\mathbb S} 
\def\FF{\mathcal F} \def\SS{\mathcal S}
\def\LL{\mathcal L} \def\D{\mathbb D} 
\renewcommand{\H}{\mathbb H}

\newcommand{\Hig}{\operatorname{Hig}}
\newcommand{\Cyl}{\operatorname{Cyl}}
\newcommand{\Aut}{\operatorname{Aut}}
\newcommand{\conj}{\operatorname{conj}}
\newcommand{\card}{\operatorname{Card}}
\newcommand{\Sym}{\operatorname{Sym}}
\newcommand{\Cent}{\operatorname{Cent}}
\newcommand{\Fix}{\operatorname{Fix}}
\newcommand{\Isom}{\operatorname{Isom}}
\newcommand{\Cay}{\operatorname{Cay}}
\newcommand{\Stab}{\operatorname{Stab}}
\newcommand{\CAT}{\operatorname{CAT}}
\newcommand{\Hom}{\operatorname{Hom}}
\newcommand{\ov}{\overline}
\mathchardef\mhyphen="2D
\renewcommand{\le}{\leqslant}
\renewcommand{\ge}{\geqslant}
\newcommand{\lk}{\operatorname{lk}}
\newcommand{\im}{\operatorname{im}}

\begin{abstract}
We give a unified proof of property $R_\infty$ for the Higman groups $H_n$
($n\ge 4$) and for their generalizations studied by Martin and
Horbez--Huang. As a key step, we prove that the automorphism groups of these
groups are acylindrically hyperbolic. As a byproduct, we obtain acylindrical
hyperbolicity of the groups themselves. In addition, we give
an independent proof, based on Delzant's lemma, of the criterion of Fournier-Facio and collaborators
stating that if $\Aut(G)$ is acylindrically hyperbolic and $\Inn(G)$ is infinite,
then $G$ has property $R_\infty$.

\smallskip\noindent
{\bf 2020 Mathematics Subject Classification~} 
Primary: 20F65, 20E36; Secondary: 20F67, 20E45.

\smallskip\noindent
{\bf Keywords\ \ } 
Acylindrical hyperbolicity, automorphism groups, generalized Higman groups, property $R_\infty$, twisted conjugacy.

\end{abstract}

\maketitle


\section{Introduction}\label{sec1}

The Higman groups form a remarkable family at the crossroads of combinatorial and
geometric group theory. For $n\ge4$, the classical Higman group is defined as
\[
H_n=\left\langle a_1,\dots,a_n\ \middle|\ a_i a_{i+1} a_i^{-1}=a_{i+1}^2
\text{ for all } i\in \Z/n\Z \right\rangle .
\]
Introduced by Higman in~\cite{Higma1}, they were the first finitely presented
infinite groups without nontrivial finite quotients. For a long time they were
viewed mainly as a source of pathological examples, but  subsequent work revealed
a rich geometric picture.

In~\cite{Marti1}, Martin introduced the $4$-generated family of
\emph{Higman-like groups}
\[
H(m_i)_{i\in \Z/4\Z}
=
\left\langle a_1,a_2,a_3,a_4 \ \middle|\
a_i a_{i+1} a_i^{-1}=a_{i+1}^{m_i}
\ \text{for all } i\in \Z/4\Z
\right\rangle,
\qquad m_i\ge2,
\]
which includes the classical Higman group $H_4$ as the special case
$m_1=m_2=m_3=m_4=2$. He showed that these groups admit natural cocompact
actions on associated $\CAT(0)$ square complexes, and used this geometry to study
their endomorphisms and automorphisms. For the higher Higman groups, the picture
becomes even richer: for $n\ge5$, the same cyclic pattern in the defining
presentation gives rise to negatively curved polygonal complexes, and Martin
exhibited a natural $\CAT(-1)$ space on which $H_n$ acts acylindrically~\cite{Marti2}.

More recently, Horbez and Huang~\cite{HorHua1} considered \emph{generalized Higman groups}. Let $n\ge4$ and let 
$\sigma=\bigl((m_1,n_1),\dots,(m_n,n_n)\bigr)$, 
where $m_i,n_i\in\Z\setminus\{0\}$ satisfy $|m_i|\neq |n_i|$ for all $i$.
They define
\[
H_\sigma=
\left\langle a_1,\dots,a_n \ \middle|\
a_i a_{i+1}^{m_i} a_i^{-1}=a_{i+1}^{n_i}
\ \text{for } i\in \Z/n\Z
\right\rangle.
\]
This family contains the classical Higman groups as the special case
\[
\sigma_n=\bigl((1,2),\dots,(1,2)\bigr),
\qquad H_{\sigma_n}=H_n,
\]
and it also contains Martin's $4$-generated Higman-like groups as the subfamily
\[
\sigma=\bigl((1,m_1),(1,m_2),(1,m_3),(1,m_4)\bigr),
\qquad m_i\ge2.
\]
Geometrically, the groups $H_\sigma$ arise from the same cyclic polygon-of-groups
construction as the classical Higman groups, except that the Baumslag--Solitar
parameters are allowed to vary from one vertex of the cycle to the next.
Horbez and Huang~\cite{HorHua1} showed that the associated complexes determine the groups $H_\sigma$
themselves. For $n\ge5$, this yields measure equivalence rigidity,
as well as consequences for lattice embeddings and automorphism groups.

In this paper we study generalized Higman groups and Higman-like groups from the
point of view of twisted conjugacy and property $R_\infty$, which we now recall.

Let $G$ be a group and let $\varphi$ be an automorphism of $G$. We say that
elements $g,h\in G$ are \emph{$\varphi$-twisted conjugate} if there exists
$x\in G$ such that $h=x\,g\,\varphi(x)^{-1}$.
This defines an equivalence relation on $G$, and the (possibly infinite) number
of its equivalence classes is called the \emph{Reidemeister number} of $\varphi$,
denoted by $R(\varphi)$. We say that $G$ \emph{has property $R_\infty$} if
$R(\varphi)=\infty$ for all $\varphi\in \Aut(G)$.

Property $R_\infty$ is a subtle and somewhat mysterious property of groups,
which has its origins in Nielsen fixed point theory, and the problem of determining
which classes of groups possess it remains an area of active research. The list
of groups known to have property $R_\infty$ is quite large and contains
non-elementary Gromov-hyperbolic and relatively hyperbolic groups, non-abelian
generalized Baumslag--Solitar groups, many weakly branch groups, many arithmetic
linear groups, mapping class groups of surfaces with large enough complexity,
and some other non-amenable groups, see~\cite{FelTro2,FelNas1} for references.
On the other hand, the free nilpotent group $N_{r,c}$ of rank $r$ and nilpotency
class $c$ has property $R_\infty$ if and only if $c\ge2r$, see~\cite{DekGon1}.
It has been conjectured that all finitely generated residually finite groups which
are not solvable-by-finite (in a weaker version: non-amenable) have property
$R_\infty$~\cite{FelTro2,SorVas1}. For some recent developments, see
\cite{CalSor1,Witdo1,DekLat1,Troit1,FFIMSW1,DeLiRe1}.

We prove the following theorem.

\begin{thm}\label{thm1}\ 

\begin{enumerate}
\item Let $n\ge5$, and let
\[
H_\sigma=
\left\langle a_1,\dots,a_n \ \middle|\
a_i a_{i+1}^{m_i} a_i^{-1}=a_{i+1}^{n_i}
\ \text{for } i\in \Z/n\Z
\right\rangle
\]
be a generalized Higman group, with
\[
\sigma=\bigl((m_1,n_1),\dots,(m_n,n_n)\bigr),
\]
where $m_i,n_i\in\Z\setminus\{0\}$ satisfy $|m_i|\neq |n_i|$ for all $i$.
Then $H_\sigma$ has property $R_\infty$.

\item Let
\[
H(m_i)_{i\in \Z/4\Z}
=
\left\langle a_1,a_2,a_3,a_4\ \middle|\
a_i a_{i+1} a_i^{-1}=a_{i+1}^{m_i}
\ \text{for all } i\in \Z/4\Z
\right\rangle,
\]
where $m_i\ge 2$ for all $i$, be a $4$-generated Higman-like group in the sense of Martin.
Equivalently, this is the group $H_\sigma$ for
\[
\sigma=\bigl((1,m_1),(1,m_2),(1,m_3),(1,m_4)\bigr).
\]
Then $H(m_i)_{i\in \Z/4\Z}$ has property $R_\infty$.
\end{enumerate}
In particular, the classical Higman groups $H_n$ have property $R_\infty$ for all $n\ge4$.
\end{thm}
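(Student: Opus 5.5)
The plan follows the strategy announced in the abstract. We invoke the criterion of Fournier-Facio and collaborators (which the paper reproves via Delzant's lemma): if $\Aut(G)$ is acylindrically hyperbolic and $\Inn(G)$ is infinite, then $G$ has property $R_\infty$. So for $G=H_\sigma$ (with $n\ge5$, or $n=4$ in Martin's family) it suffices to prove two things.

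\emph{First step: $\Inn(G)\cong G/Z(G)$ is infinite.} This is easy. The group $G$ is the fundamental group of a nonpositively curved polygon of groups, namely an $n$-gon whose vertex groups are the Baumslag--Solitar groups $BS(m_i,n_i)$. So $G$ is infinite and acts on the associated $\CAT(0)$ polygonal complex $X$ (and $\CAT(-1)$ when $n\ge5$) without a global fixed point. If $Z(G)$ were nontrivial, its elements would have to commute with non-commuting hyperbolic elements. Also, the local groups $BS(m_i,n_i)$ with $|m_i|\ne|n_i|$ have trivial center. Hence $Z(G)=1$ and $\Inn(G)\cong G$ is infinite.

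\emph{Second step: $\Aut(G)$ is acylindrically hyperbolic.} This is the main work. We use the rigidity results of Martin and Horbez--Huang: every automorphism of $H_\sigma$ preserves the family of vertex stabilizers, that is, the conjugacy classes of the $BS$ subgroups and their intersections. Therefore $\Aut(G)$ acts on the complex $X$ (or on a canonical $\CAT(0)$ model built from it) by isometries, and this action extends the action of $\Inn(G)\cong G$.

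Concretely, we build the action as follows. A vertex of $X$ is a coset of a vertex group, and an automorphism carries vertex subgroups to vertex subgroups up to conjugacy. This gives a homomorphism $\Aut(G)\to\Isom(X)$. We then need to show that the kernel is small, that $\Out(G)$ is finite (a dihedral symmetry group of the cycle with trivial parameter-twisting), or at least that the extended action is still acylindrical or has WPD elements.

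Our preferred route is to locate a loxodromic element $g\in G$ acting on $X$ that is WPD for the $\Aut(G)$-action. For $n\ge5$ we can use Martin's acylindrical action of $H_n$ on a $\CAT(-1)$ space, or rather its $\sigma$-analogue. Since $\Inn(G)$ has finite index in $\Aut(G)$, and $G$ acts acylindrically, the WPD property passes to $\Aut(G)$, and Osin's theorem gives acylindrical hyperbolicity. For $n=4$ the complex is only $\CAT(0)$, a square complex. There we instead exhibit a rank-one (contracting) isometry. We take an element whose axis crosses vertex stars in a way that avoids flats, following Martin's analysis of $H(m_i)$. We then apply the Bestvina--Fujiwara/Sisto criterion: a rank-one isometry in a proper cocompact $\CAT(0)$ action is WPD (Sisto), and properness is replaced by a control on stabilizers of pairs of far-apart points.

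The main obstacle is the $n=4$ case, together with controlling point stabilizers in $\Aut(G)$. Stabilizers in $G$ are infinite $BS$ groups, and the action of $G$ is not proper. So WPD has to be checked by hand: one shows that elements fixing long segments of the axis of $g$ are trivial or finite in number. We would try first to show that pointwise stabilizers of two adjacent edges in distinct directions intersect trivially, using the malnormality-type property $a_i^k$-centralizers in $BS(1,m)$. Given such a "strongly contracting, edge-transverse" axis, the WPD property of $g$ for $\Aut(G)$ follows, which completes both parts of the theorem.
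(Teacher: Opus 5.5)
Your top-level reduction matches the paper's: the criterion ``$\Aut(G)$ acylindrically hyperbolic and $\Inn(G)$ infinite'', $\Out(G)\cong F_\sigma$ finite (a group of cyclic index shifts, not a dihedral group), and the extension of the $G$-action on $X$ to $\Aut(G)$. But the step that carries all the weight, acylindrical hyperbolicity of $\Aut(G)$, is not established, and for $n=4$ the mechanism you propose would not work as stated.

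\textbf{The case $n=4$.} You never produce a rank-one element. An axis ``avoiding flats'' is not an argument, and flats are ubiquitous in $X$: every square lies in one. For a non-proper action on this locally infinite square complex, the standard source of rank-one elements is Caprace--Sageev rank rigidity. Its hypotheses are that the action is essential, has no fixed point in $\partial_\infty X$, and that $X$ is not a product. You check none of them, and proving them is where the paper's work goes:
\begin{itemize}
\item essentiality, via a flat through every edge and a median-graph comparison of distances to hyperplane carriers;
\item no fixed point at infinity, via boundedness of fixed sets of elliptic elements \cite[Corollary~2.3]{Marti1};
\item irreducibility, via vertex links that are not complete bipartite.
\end{itemize}
Next, Sisto's theorem needs properness. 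The ``replacement'' of properness by control of stabilizers of far-apart points is precisely Martin's Corollary~G \cite{Marti2}, not a consequence of local algebra. Your proposed local input is also of the wrong kind. WPD concerns elements that move two far-apart points of the axis by at most $\varepsilon$, not elements that fix them. Moreover, stabilizers of distinct edges at a vertex can intersect in an infinite subgroup. For example, the edges $G_{e_i}$ and $a_{i-1}G_{e_i}$ at the vertex $G_{x_i}$ have $\langle a_i\rangle\cap a_{i-1}\langle a_i\rangle a_{i-1}^{-1}=\langle a_i^{m_{i-1}}\rangle$.

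The fix is to apply Martin's Corollary~G directly to $\Aut(G)$:
\begin{itemize}
\item weak acylindricity holds because $\Fix(h)$ has diameter at most $2$ for $1\ne h\in G$ and $F_\sigma$ is finite;
\item essentiality and the absence of fixed points in $X\cup\partial_\infty X$ pass from $G$ to the overgroup $\Aut(G)$;
\item non-splitting as a product of trees concerns $X$ alone;
\item the virtually cyclic alternative is excluded because $G$ contains the non-virtually-cyclic subgroups $BS(1,m_i)$.
\end{itemize}

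\textbf{The case $n\ge5$.} Here the gap is smaller but real. Martin's acylindricity result concerns the classical $H_n$, and you give no source for its ``$\sigma$-analogue''. What is available is weak acylindricity of the $G$-action \cite[Lemma~2.7]{HorHua1}. This passes to $\Aut(G)=G\rtimes F_\sigma$ because $F_\sigma$ is finite. The upgrade to acylindricity is \cite[Theorem~E]{MarPrz1}, applied to a barycentric subdivision, which is a $\CAT(-1)$ simplicial complex with finitely many shapes. Non-elementarity then follows from Osin's trichotomy, since bounded orbits would give a global fixed point and $\Aut(G)$ is not virtually cyclic.

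\textbf{The center.} Your argument for $Z(G)=1$ is not a proof. Commuting with two non-commuting loxodromics is no obstruction to being central: consider $F_2\times\Z$ acting on the tree of $F_2$. Trivial centers of the vertex groups also say nothing about central elements lying outside them. Cite \cite[Lemma~2.6]{HorHua1} and \cite[Lemma~3.1, Corollary~3.2]{Marti1} instead.
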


Our paper is organized as follows. 

In Section~\ref{sec2}, we prove
a general criterion implying property $R_\infty$
once $\Inn(G)$ admits a non-elementary action on a Gromov-hyperbolic space (Proposition~\ref{prop:delzant}). 
As an application, we obtain an independent proof of the following  criterion due to Fournier-Facio and collaborators (\cite[Theorem~E]{FFaWad1} and 
\cite[Corollary~8.1.4]{FFIMSW1}):
\begin{ff-criterion}
Let $G$ be a group such that $\Inn(G)$ is infinite and $\Aut(G)$ is acylindrically hyperbolic.
Then $G$ has property $R_\infty$.
\end{ff-criterion}
Our proof is based on Delzant's lemma (whose detailed proof we recently provided in~\cite{SorVas1}) 
and is conceptually more elementary than the route through the
Bestvina--Fujiwara theory of quasimorphisms, underlying the argument  in~\cite{FFaWad1}. 

Then, in Section~\ref{sec3}, we introduce the Martin--Horbez--Huang complex
$X_\sigma$ and the natural action of $\Aut(G)$ on it (where $G$ is any group appearing in
Theorem~\ref{thm1}), so as to treat generalized Higman groups and Martin's $4$-generated
Higman-like groups in parallel. We also establish the geometric properties of this action needed later.
In Section~\ref{sec4}, we prove that $\Aut(G)$ is acylindrically hyperbolic in the two cases
of Theorem~\ref{thm1}. For generalized Higman groups with $n\ge 5$, this follows from the
hyperbolicity of $X_\sigma$ together with acylindricity of the induced action. For $4$-generated Higman-like groups, we apply Martin's criterion for
non-uniformly weakly acylindrical actions on $\CAT(0)$ square complexes. In both cases, Corollary~\ref{cor:aut-ah-implies-rinfty}
then implies property $R_\infty$. As a byproduct, our argument also yields the acylindrical hyperbolicity of groups appearing in Theorem~\ref{thm1} themselves:
\begin{ah-corollary}
Let $G$ be any group $H_\sigma$ or $H(m_i)$ appearing in Theorem~\ref{thm1}.
Then $G$ is acylindrically hyperbolic.
\end{ah-corollary}
Finally, in Section~\ref{sec:remarks}, we discuss four known conditions implying
property $R_\infty$ and conclude with an open question about a possible implication
between two of them.

\begin{acknow}
The authors thank Alexandre Martin for useful conversations. This work was started at the AIM workshop ``Geometry and topology of Artin groups'' organized by Ruth Charney, Kasia Jankiewicz and Kevin Schreve in September 2023, and continued at
the BIRS workshop ``Combinatorial Nonpositive Curvature'' organized by Kasia Jankiewicz and Piotr Przytycki in September 2024.
The authors thank the organizers, AIM and BIRS. The second author is
supported by the fellowship \#P5R5PT\textunderscore230599/1 of the Swiss National
Science Foundation.
\end{acknow}

\section{Twisted conjugacy and Delzant's lemma}\label{sec2}

Recall that for a group $G$ with center $Z(G)$, the subgroup $\Inn(G)\le \Aut(G)$
of inner automorphisms is isomorphic to $G/Z(G)$ via
$g\mapsto \conj_g$, where $\conj_g(x)=gxg^{-1}$.

Let $\Gamma$ be a group acting by isometries on a geodesic Gromov-hyperbolic space $X$, and let $\partial X$ be the Gromov boundary of $X$.
Following Bestvina--Fujiwara~\cite{BesFuj1}, we call the action of $\Gamma$ on $X$ \emph{non-elementary} if $\Gamma$ contains two independent loxodromic elements, that is, two loxodromic elements whose fixed-point sets in $\partial X$ are disjoint.

The next result, Delzant's lemma, is key to our approach. It first appeared in~\cite[Lemma~3.4]{LevLus1}, stated there only for hyperbolic groups and accompanied by a brief proof sketch, and later in its present form in~\cite[Lemma~6.3]{FeGoDa1}. For a detailed proof, see~\cite[Section~5]{SorVas1}.

\begin{delzantlem}
Let $\Gamma$ be a group acting non-elementarily by isometries on a
geodesic Gromov-hyperbolic space, and let $K$ be a normal subgroup of $\Gamma$
such that the quotient $\Gamma/K$ is abelian. Then every coset of $K$
contains infinitely many conjugacy classes.
\end{delzantlem}

We obtain the following criterion.

\begin{prop}\label{prop:delzant}
Let $G$ be a group. Assume that $\Aut(G)$ acts by isometries on a geodesic Gromov-hyperbolic space $X$
in such a way that the action of the subgroup $\Inn(G)\le \Aut(G)$ is non-elementary. Then $G$ has
property $R_\infty$.
\end{prop}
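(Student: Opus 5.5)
The plan is to translate twisted conjugacy classes of $\varphi$ into ordinary conjugacy classes inside a coset of $\Inn(G)$ in a suitable subgroup of $\Aut(G)$, and then apply Delzant's Lemma. Fix $\varphi\in\Aut(G)$ and set $\Gamma=\langle \Inn(G),\varphi\rangle\le\Aut(G)$. Since $\Inn(G)$ is normal in $\Aut(G)$, it is normal in $\Gamma$, and $\Gamma/\Inn(G)$ is cyclic, generated by the image of $\varphi$; in particular it is abelian. The action of $\Gamma$ on $X$ (restricted from $\Aut(G)$) is non-elementary, because $\Gamma$ contains $\Inn(G)$, which already contains two independent loxodromics. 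Delzant's Lemma therefore applies with $K=\Inn(G)$: the coset $\Inn(G)\varphi$ contains infinitely many $\Gamma$-conjugacy classes.

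Next I compare $\Gamma$-conjugacy on $\Inn(G)\varphi$ with twisted conjugacy. Consider the map $G\to \Inn(G)\varphi$, $g\mapsto \conj_g\circ\varphi$, which is surjective. If $h=xg\varphi(x)^{-1}$, then a direct computation using $\varphi\circ\conj_{x}^{-1}\circ\varphi^{-1}=\conj_{\varphi(x)^{-1}}$ gives
\[
\conj_x\circ(\conj_g\circ\varphi)\circ\conj_x^{-1}=\conj_{xg\varphi(x)^{-1}}\circ\varphi=\conj_h\circ\varphi,
\]
so $\varphi$-twisted conjugate elements map to $\Inn(G)$-conjugate elements. Hence the number of $\Inn(G)$-conjugacy classes in $\Inn(G)\varphi$ is at most $R(\varphi)$.

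The remaining point, which I expect to be the only real subtlety, is that Delzant's Lemma counts $\Gamma$-conjugacy classes, not $\Inn(G)$-conjugacy classes. These are comparable: conjugation by $\varphi$ preserves the coset $\Inn(G)\varphi$, and every element of $\Gamma$ has the form $\conj_x\varphi^k$. So each $\Gamma$-class in the coset is a union of $\Inn(G)$-classes; if there were only finitely many $\Inn(G)$-classes, there would be only finitely many $\Gamma$-classes. Infinitely many $\Gamma$-classes therefore force infinitely many $\Inn(G)$-classes, and hence $R(\varphi)=\infty$. Since $\varphi$ was arbitrary, $G$ has property $R_\infty$.
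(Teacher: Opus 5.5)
Your proof is correct and follows the paper's argument essentially verbatim: the same subgroup $\Gamma_\varphi=\langle\Inn(G),\varphi\rangle$, the same application of Delzant's lemma to the coset $\Inn(G)\varphi$, and the same identity $\conj_x\circ(\conj_g\circ\varphi)\circ\conj_x^{-1}=\conj_h\circ\varphi$. The only difference is that the paper maps $\varphi$-twisted classes directly onto $\Gamma_\varphi$-conjugacy classes in the coset, which is well defined immediately since $\conj_x\in\Gamma_\varphi$; your intermediate count of $\Inn(G)$-conjugacy classes is valid but not needed.
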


\begin{proof}
Fix $\varphi\in \Aut(G)$ and consider the subgroup
\[
\Gamma_\varphi:=\langle \Inn(G),\varphi\rangle \le \Aut(G).
\]
Since the action of $\Inn(G)$ on $X$ is non-elementary, so is the action of $\Gamma_\varphi$ on $X$.
Moreover, $\Inn(G)$ is a normal subgroup of $\Gamma_\varphi$, and the quotient
$\Gamma_\varphi/\Inn(G)$ is cyclic, hence abelian. Therefore Delzant's lemma applies and shows that
the coset $\Inn(G)\,\varphi$ contains infinitely many conjugacy classes in $\Gamma_\varphi$.

Consider the map (for clarity, we write multiplication in $\Aut(G)$ as $\circ$):
\[
[g]_\varphi \longmapsto [\conj_g\circ\,\varphi]
\]
from $\varphi$-twisted conjugacy classes in $G$ to ordinary conjugacy classes in $\Gamma_\varphi$ contained in the coset
$\Inn(G)\,\varphi$. We first check that this map is well defined. Suppose that $g,h\in G$ are
$\varphi$-twisted conjugate, so that $h=x\,g\,\varphi(x)^{-1}$ for some $x\in G$. Then for every $y\in G$,
\begin{multline*}
\bigl(\conj_x\circ\,(\conj_g\circ\,\varphi)\circ\conj_x^{-1}\bigr)(y)
=x\,g\,\varphi(x^{-1}yx)\,g^{-1}x^{-1}\\
=x\,g\,\varphi(x)^{-1}\,\varphi(y)\,\varphi(x)\,g^{-1}x^{-1}
=h\,\varphi(y)\,h^{-1}
=(\conj_h\circ\,\varphi)(y).
\end{multline*}
Hence
\[
\conj_x\circ\,(\conj_g\circ\,\varphi)\circ\conj_x^{-1}
=\conj_h\circ\,\varphi,
\]
so $\conj_g\circ\,\varphi$ and $\conj_h\circ\,\varphi$ are conjugate in $\Gamma_\varphi$. Thus the map is
well defined.

It is also surjective, because every element of $\Inn(G)\,\varphi$ has the form $\conj_g\circ\,\varphi$
for some $g\in G$. Since the coset $\Inn(G)\,\varphi$ contains infinitely many conjugacy classes, it
follows that $R(\varphi)=\infty$.

As $\varphi\in \Aut(G)$ was arbitrary, we conclude that $G$ has property $R_\infty$.
\end{proof}

As an application, we now give an independent proof of the criterion due to Fournier-Facio and collaborators (\cite[Theorem~E]{FFaWad1} and \cite[Corollary~8.1.4]{FFIMSW1}): 

\begin{corl}\label{cor:aut-ah-implies-rinfty}
Let $G$ be a group such that $\Inn(G)$ is infinite and $\Aut(G)$ is acylindrically hyperbolic.
Then $G$ has property $R_\infty$.
\end{corl}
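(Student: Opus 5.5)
The plan is to reduce to Proposition~\ref{prop:delzant}. Since $\Aut(G)$ is acylindrically hyperbolic, it admits an acylindrical, non-elementary action by isometries on a geodesic Gromov-hyperbolic space $X$. Fix such an action. It then suffices to show that the restriction of this action to the subgroup $\Inn(G)$ is non-elementary; Proposition~\ref{prop:delzant} then gives property $R_\infty$ for $G$ directly.

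The key input is that $\Inn(G)$ is a normal subgroup of $\Aut(G)$, since $\varphi\circ\conj_g\circ\varphi^{-1}=\conj_{\varphi(g)}$, and it is infinite by hypothesis. I would invoke Osin's result that an infinite normal subgroup of a group acting acylindrically and non-elementarily on a hyperbolic space itself acts non-elementarily. This is \cite[Lemma~7.2 / Corollary~1.5]{Osin}-type material. If a self-contained argument is preferred, it can be organized as follows.

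First, by Osin's trichotomy for acylindrical actions, the subgroup $N=\Inn(G)$ either has bounded orbits, or is virtually cyclic containing a loxodromic element, or contains two independent loxodromics. We must rule out the first two cases.

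\emph{Case of a loxodromic element.} Suppose $N$ contains a loxodromic $h$ with fixed points $\{h^\pm\}\subset\partial X$. Normality gives that $\Aut(G)$ permutes the set of fixed-point pairs of loxodromics in $N$. Because $\Aut(G)$ is non-elementary, its limit set is infinite and it has no finite orbit in $\partial X$. Hence some conjugate $\psi h\psi^{-1}\in N$ has fixed points distinct from $\{h^\pm\}$. Two loxodromics whose fixed-point pairs differ have disjoint fixed-point sets for acylindrical actions, since otherwise they would share an endpoint and acylindricity forces commensurability, giving equal pairs. So these two elements are independent, and $N$ is non-elementary.

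\emph{Case of bounded orbits.} This is the step I expect to be the main obstacle. Suppose $N$ has bounded orbits. By normality, for every $\psi$ the orbit $\psi N x=N\psi x$ is bounded with the same diameter. Take a loxodromic $g\in\Aut(G)$ and a point $x$ on a quasi-axis of $g$. Then $N$ moves every point $g^k x$ by at most a uniform constant $C$, using that the quasi-centers of bounded orbits are moved boundedly. Acylindricity, applied to the pair $x, g^k x$ with $k$ large, shows that only finitely many elements move both points by at most $C$. This forces $N$ to be finite, contradicting the assumption that $\Inn(G)$ is infinite.

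Combining the cases, $\Inn(G)$ acts non-elementarily on $X$, and Proposition~\ref{prop:delzant} concludes the proof.
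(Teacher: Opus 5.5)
Your proposal is correct and follows the paper's proof: since $\Inn(G)$ is infinite and normal in $\Aut(G)$, Osin's Lemma~7.2 makes the restricted action non-elementary, and Proposition~\ref{prop:delzant} then gives property $R_\infty$. Your optional self-contained unpacking of that lemma is also sound, where the paper simply cites Osin: you rule out bounded orbits by applying acylindricity to the pair $x, g^k x$, and you rule out the lineal case by conjugating a loxodromic to obtain one with a different fixed pair.
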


\begin{proof}
Since $\Aut(G)$ is acylindrically hyperbolic, it admits a non-elementary acylindrical
action on a geodesic Gromov-hyperbolic space $X$. The subgroup $\Inn(G)$ is infinite
and normal in $\Aut(G)$, hence $s$-normal in the sense of~\cite{Osin1}. Therefore, by~\cite[Lemma~7.2]{Osin1}, 
the restricted action of $\Inn(G)$ on $X$ is non-elementary.
Proposition~\ref{prop:delzant} now implies that $G$ has property $R_\infty$.
\end{proof}

\begin{rem}
There are two notions of a \emph{non-elementary action} in the literature. In the sense adopted by Bestvina--Fujiwara, an action on a hyperbolic space is non-elementary if it contains two independent loxodromic elements~\cite[p.~72]{BesFuj1}. By contrast, following Gromov~\cite[Section~8.2]{Gromo1}, one often calls an action elementary if its limit set in the boundary has cardinality at most $2$, and hence non-elementary if the limit set has cardinality greater than $2$; this is the terminology adopted, for instance, by Osin~\cite[p.~852]{Osin1}. The discrepancy comes from the \emph{quasi-parabolic case}: by definition, a quasi-parabolic action fixes a unique point of the boundary and contains a loxodromic element. In particular, a quasi-parabolic action is non-elementary in Gromov's sense, but all of its loxodromic elements share the same fixed point at infinity, so there are no independent loxodromics. For acylindrical actions the two notions of non-elementary action coincide, since Osin's trichotomy rules out quasi-parabolic actions~\cite[Theorem~1.1]{Osin1}.

This distinction is relevant for Delzant's lemma. Consider the Baumslag--Solitar group $\Gamma=BS(1,2)=\langle a,b\mid bab^{-1}=a^2\rangle$, realized as a subgroup of $GL_2(\R)$ by matrices $a=\left(\begin{smallmatrix}1&1\\0&1\end{smallmatrix}\right)$ and $b=\left(\begin{smallmatrix}2&0\\0&1\end{smallmatrix}\right)$, acting on $\mathbb H^2$ by fractional linear transformations. Every element of $\Gamma$ fixes $\infty$, since all matrices are upper triangular, while $b$ is loxodromic, as it fixes $0$ and $\infty$; thus the action is quasi-parabolic. In particular, it is non-elementary in Gromov's sense, since its limit set is infinite, but not in the stronger Bestvina--Fujiwara sense, since all loxodromic elements share the fixed point $\infty$ and therefore there are no independent loxodromics. Moreover, if $K=\ker(\Gamma\to\Z)$, where $a\mapsto 0$ and $b\mapsto 1$, then $K\lhd \Gamma$, and  $\Gamma/K\cong\Z$. An easy computation shows that $b^{-m}a^nb^{m}=\left(\begin{smallmatrix}1& {n}/{2^m}\\0&1\end{smallmatrix}\right)$, hence $K$ consists of the unipotent matrices $\left(\begin{smallmatrix}1&r\\0&1\end{smallmatrix}\right)$ with $r\in \Z[1/2]$, and the coset $Kb$ consists of all matrices of the form $\left(\begin{smallmatrix}2&r\\0&1\end{smallmatrix}\right)$ with $r\in \Z[1/2]$. However, this coset contains only one conjugacy class: because conjugation by $\left(\begin{smallmatrix}1&s\\0&1\end{smallmatrix}\right)\in K$ sends $\left(\begin{smallmatrix}2&r\\0&1\end{smallmatrix}\right)$ to $\left(\begin{smallmatrix}2&r-s\\0&1\end{smallmatrix}\right)$, all elements of $Kb$ are conjugate. Hence Delzant's lemma conclusion fails in this quasi-parabolic setting, which is why we adopt the Bestvina--Fujiwara definition of a non-elementary action for our needs.
\end{rem}

\section{The Martin--Horbez--Huang complex \texorpdfstring{$X_\sigma$}{X\_sigma} and the action of \texorpdfstring{$\Aut(G)$}{Aut(G)}}\label{sec3}

In this section we collect the geometric and algebraic facts that will be used later for both
generalized Higman groups with $n\ge 5$ and the $4$-generated Higman-like groups of Martin.

Throughout, let
\[
G=H_\sigma,
\]
where either:
\begin{itemize}
\item $n\ge 5$ and
\[
\sigma=\bigl((m_1,n_1),\dots,(m_n,n_n)\bigr),
\qquad |m_i|\neq |n_i| \ \text{\ \ for all } i,
\]
or
\item $n=4$ and
\[
\sigma=\bigl((1,m_1),(1,m_2),(1,m_3),(1,m_4)\bigr),
\qquad m_i>1 \ \text{\ \ for all } i.
\]
\end{itemize}
Thus the $4$-generated Higman-like groups considered by Martin are precisely the groups $H_\sigma$
in the second case, and the classical Higman group is
\[
H_4=H_{((1,2),(1,2),(1,2),(1,2))}.
\]

\subsection{The complex \texorpdfstring{$X_\sigma$}{X\_sigma}}

Consider the complex of groups over the $n$-gon $K$ constructed as follows.
Let $K$ be a $2$-dimensional $n$-gon with vertices $x_i$ and edges $e_i$, where
$e_i$ is oriented from $x_i$ to $x_{i+1}$ for $i\in \Z/n\Z$.
We equip $K$ with a complex-of-groups structure as follows.
To each vertex $x_i$ we assign the Baumslag--Solitar group
\[
G_{x_i}=
\langle a_{i-1},a_i \mid
a_{i-1}a_i^{m_{i-1}}a_{i-1}^{-1}=a_i^{n_{i-1}}\rangle
\simeq BS(m_{i-1},n_{i-1}),
\]
and to each edge $e_i$ we assign the cyclic group
\[
G_{e_i}=\langle a_i\rangle.
\]
By Britton's Lemma, the natural homomorphisms from $G_{e_i}$ to the adjacent vertex groups,
sending the generator of $G_{e_i}$ to $a_i$, are injective. In the generalized Higman setting,
Horbez and Huang proved that the natural homomorphisms $G_{x_i}\to G$ are injective
\cite[Lemma~2.1]{HorHua1}; in the $4$-generated Higman-like setting, this is the developability
of Martin's square of groups \cite[Section~1.1]{Marti1}. Thus in both cases we may regard the vertex
groups $G_{x_i}$, and hence also the edge groups $G_{e_i}$, as subgroups of $G$.

Following Horbez--Huang, and Martin in the $n=4$ case, let $X=X_\sigma$ be the simply connected
polygonal complex obtained by development of the complex of groups defined by $K$ and the above vertex groups, edge groups and homomorphisms (see \cite[Theorem II.12.18]{BriHae1}).
Its vertices are the left cosets $gG_{x_i}$ with $g\in G$, its edges are the left cosets $gG_{e_i}$
with $g\in G$, and its $2$-cells correspond to the left cosets of the trivial subgroup of $G$,
that is, they are indexed by the elements $g\in G$.
Incidence is defined by inclusion of cosets.
By \cite[Lemma~2.2]{HorHua1}, the quotient $G\backslash X$ is naturally identified with $K$;
for $n=4$, this is exactly Martin's square complex associated to the standard square of groups
\cite[Section~1.1]{Marti1}.

Since the quotient $G\backslash X$ is the finite complex $K$, the complex $X$ has finitely many
isometry types of cells. By \cite[Section~2.2]{HorHua1} and~\cite[Theorem~I.7.50]{BriHae1}, the complex $X$ admits a $G$-invariant metric
for which:
\begin{itemize}
\item if $n=4$, then $X$ is a complete geodesic $\CAT(0)$ piecewise Euclidean  polygonal complex;
\item if $n\ge5$, then $X$ is a complete geodesic $\CAT(-1)$ piecewise hyperbolic polygonal complex (with each face isometric to a polygon in $\H^2$).
\end{itemize}
For $n=4$, this is precisely Martin's $\CAT(0)$ square complex. 
In all cases, the action of $G$ on $X$ is by cellular isometries without
inversion, that is, every element stabilizing a cell fixes it pointwise
\cite[Section~2.2]{HorHua1}, \cite[Section~1.1]{Marti1}.

\subsection{The group \texorpdfstring{$\Aut(G)$}{Aut(G)} and its action}\label{subsec:aut-action}

We now explain how $\Aut(G)$ acts on $X$ and how this relates to the original action of $G$ on $X$.

In the generalized Higman case, there is no loss of generality in assuming that $|m_i|<|n_i|$ for every $i$.
Indeed, as observed by Horbez--Huang, replacing a pair $(m_i,n_i)$ by $(n_i,m_i)$ in $\sigma$
yields an isomorphic group, via the isomorphism sending the corresponding generator $a_i$ to
$a_i^{-1}$.

Recall that
\[
\sigma=\bigl((m_1,n_1),\dots,(m_n,n_n)\bigr),
\]
with indices taken modulo $n$.
Following Horbez--Huang, let $F_\sigma$ be the finite subgroup of translations
\[
\tau\colon \Z/n\Z\to \Z/n\Z,\qquad i\mapsto i+r,
\]
for some $r\in \Z/n\Z$, such that for every $i$, one has
\[
(m_{\tau(i)},n_{\tau(i)})=(m_i,n_i)
\qquad\text{or}\qquad
(m_{\tau(i)},n_{\tau(i)})=(-m_i,-n_i).
\]
In the $4$-generated Higman-like case, this is exactly the finite group $K$ of cyclic index symmetries
preserving the tuple $(m_i)$ that appears in Martin's description of $\Aut(G)$.

Each $\tau\in F_\sigma$ induces an automorphism of $G$ by permuting the standard generators according
to $i\mapsto \tau(i)$.

\medskip

\noindent
\textbf{The generalized Higman case.}
Set
\[
\widehat G:=G\rtimes F_\sigma.
\]
Horbez and Huang prove that the action of $G$ on $X$ extends naturally to an action of $\widehat G$
on $X$, and that the resulting homomorphism
\[
\widehat G\longrightarrow \Aut(X)
\]
is an isomorphism \cite[Theorem~4.2]{HorHua1}.
Under the normalization $|m_i|<|n_i|$, they also prove that the natural homomorphism
\[
\widehat G\longrightarrow \Aut(G),
\qquad
(g,\tau)\longmapsto \conj_g\circ\, \tau,
\]
is an isomorphism \cite[Corollary~5.21]{HorHua1}.

\medskip

\noindent
\textbf{The $4$-generated Higman-like case.}
Martin proves that every nontrivial endomorphism of $G$ induces a combinatorial self-map of $X$
restricting to an orientation-preserving isomorphism on each square \cite[Proposition~3.4]{Marti1},
and that 
\[
\Aut(G)\simeq G\rtimes F_\sigma,
\]
see \cite[Proposition~3.10]{Marti1}.

\medskip

Thus, in both cases, $\Aut(G)$ acts naturally on $X$ by cellular isometries.

Finally, in the generalized Higman case Horbez and Huang prove that every nontrivial conjugacy class
in $G$ is infinite \cite[Lemma~2.6]{HorHua1}, and hence no nontrivial element can be central, so $Z(G)=1$; in the $4$-generated Higman-like case, Martin proves
that $Z(G)=1$ \cite[Lemma~3.1 and Corollary~3.2]{Marti1}. Hence in both cases
we have $Z(G)=1$ and $
G\simeq \Inn(G)$.

Under this identification, the action of $\Inn(G)$ on $X$ is exactly the original action of $G$ on $X$;
for the generalized Higman case this follows from the definition of the homomorphism
$\widehat G\to \Aut(G)$, and for the $4$-generated Higman-like case it is
\cite[Lemma~3.3]{Marti1}.

\subsection{Common lemmas on the action}

We now record several facts that will be used later.

Recall that an action of a group $G$ by isometries on a metric space $X$ is called 
\emph{non-uniformly weakly acylindrical} if there exists a constant $R>0$ such that for any two points $x,y\in X$ with $d_X(x,y)\ge R$, the common stabilizer of $x$ and $y$ is finite.
It is called
\emph{weakly acylindrical} if there exist
constants $R,N>0$ such that for any two points $x,y\in X$ with $d_X(x,y)\ge R$, the common
stabilizer of $x$ and $y$ has cardinality at most $N$. Finally, an action is called \emph{acylindrical} if for every
$\varepsilon>0$ there exist $R,N>0$ such that whenever $x,y\in X$ satisfy $d_X(x,y)\ge R$, there
are at most $N$ elements $g\in G$ with $d_X(x,gx)\le \varepsilon$ and $d_X(y,gy)\le \varepsilon$.
Clearly, an acylindrical action is weakly acylindrical, and a weakly acylindrical action is non-uniformly weakly acylindrical.

\begin{lem}\label{lem:Aut-weak-acyl}
The action of $\Aut(G)$ on $X$ is weakly acylindrical.
\end{lem}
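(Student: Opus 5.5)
We reduce the statement to a bound on pointwise stabilizers of cells in $X$, and then pass from $G$ to $\Aut(G)\simeq G\rtimes F_\sigma$ at the cost of a factor $|F_\sigma|$.

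\emph{Reduction to $G$.} The subgroup $\Inn(G)\simeq G$ has index $|F_\sigma|$ in $\Aut(G)$. For points $x,y\in X$, the common stabilizer in $\Aut(G)$ meets $G$ in the common stabilizer in $G$, with index at most $|F_\sigma|$. So it suffices to find $R$ and $N_0$ such that $|\Stab_G(x)\cap\Stab_G(y)|\le N_0$ whenever $d_X(x,y)\ge R$; then $N=|F_\sigma|\,N_0$ works for $\Aut(G)$.

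\emph{Stabilizers in $G$.} The $G$-action is without inversions and $2$-cells are indexed by elements of $G$, so $2$-cell stabilizers are trivial. Any element fixing a point in the interior of a $2$-cell fixes that cell pointwise, hence is trivial. An element fixing an interior point of an edge $gG_{e_i}$ lies in the conjugate $g\langle a_i\rangle g^{-1}$. I claim that the common stabilizer is in fact trivial once $d(x,y)$ is large, so $N_0=1$.

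\emph{Key step: a fixed set of uniformly bounded diameter.} We show that every nontrivial $h\in G$ has $\Fix(h)$ of diameter bounded by a uniform constant $D$. Take $R>D$. For $g\ne1$ in $\Stab(x)\cap\Stab(y)$, convexity of fixed sets in the $\CAT(0)$/$\CAT(-1)$ space gives $[x,y]\subset\Fix(g)$, which forces $d(x,y)\le D$, a contradiction.

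To bound $\Fix(h)$, we use that $\Fix(h)$ is a convex subcomplex containing no open $2$-cell. It is therefore a convex subtree of the $1$-skeleton, a union of vertices and edges.

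\emph{Local analysis at a vertex.} Let $v=G_{x_i}$ be fixed by $h\in BS(m_{i-1},n_{i-1})$. The edges at $v$ fixed by $h$ are the cosets $bG_{e_{i-1}}$ and $bG_{e_i}$ with $h\in b\langle a\rangle b^{-1}$, where $a=a_{i-1}$ or $a_i$. Normal forms in $BS(m,n)$ with $|m|<|n|$ (Britton's lemma) give the following. If $h$ is conjugate into $\langle a_{i-1}\rangle$ but not into $\langle a_i\rangle$, then the cosets of $\langle a_{i-1}\rangle$ it fixes are limited. Elliptic elements of the Bass--Serre tree fix bounded subtrees once $|m|\ne|n|$ and one moves far, since powers $a_i^{k}$ with $k$ not divisible by high powers of $m,n$ fix bounded regions. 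More concretely, Horbez--Huang and Martin prove that the fixed-point sets of nontrivial elements have bounded diameter, e.g. via their analysis of the ``tree-like'' fixed sets of powers of generators in $X_\sigma$ (the Horbez--Huang lemmas on stabilizers of pairs of edges, as in \cite[Section~2]{HorHua1}).

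\emph{The main obstacle.} The difficulty is the uniform bound along geodesics in the tree $\Fix(h)$. The constant $D$ must not depend on $h$: if $h=a_i^{k}$ and $k$ is divisible by large powers of $n_{i-1}$, the fixed tree inside the Bass--Serre tree of the vertex group can be larger. This can be controlled because the fixed set must turn at vertices of $X$. A fixed edge path of length $2$ through a vertex, using two edges of different types $e_{i-1},e_i$, forces $h$ into $\langle a_{i-1}\rangle\cap\langle a_i\rangle$ up to conjugacy, which is trivial in $BS$. A straight continuation across the vertex would require a local geodesic; the links are large-girth graphs, with girth $>2\pi$-angle in the $\CAT$ metric. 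So we would use the link condition to show that a geodesic in $\Fix(h)$ alternates edge types with bounded backtracking. This limits its length to a constant depending only on $n$.

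I expect this uniform bound to be the main obstacle. I would first try to quote a suitable Horbez--Huang or Martin lemma, and otherwise argue via angles in links as above.
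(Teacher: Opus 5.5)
Your reduction from $\Aut(G)$ to $G$ is exactly the paper's: the common stabilizer $S$ satisfies $[S:S\cap G]\le|F_\sigma|$. Your target for $G$ is also correct: every nontrivial $h$ has $\Fix(h)$ of uniformly bounded diameter, so $N_0=1$. The paper, however, gets this purely by citation: \cite[Lemma~2.7]{HorHua1} for $n\ge5$, and \cite[Corollary~2.3]{Marti1} for $n=4$, which says $\Fix(h)$ is a vertex or a union of edges in the star of one vertex. If you invoke these precisely, your proof is the paper's.

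As written, though, the key step is left open, and the self-contained route you sketch would fail. Your own observation already shows that all edges of $\Fix(h)$ at a vertex have the same type, so a fixed path never alternates edge types. Moreover, the link at a vertex is bipartite of girth $4$, and the link condition forces corner angles $\ge\pi/2$. Hence two distinct same-type edges at a vertex subtend an angle $\ge\pi$. So a path of same-type edges is a local geodesic, and the link condition places no bound on its length. Your worry about $a_i^k$ with $k$ divisible by large powers of $n_{i-1}$ is a red herring: it affects how many edges at one vertex are fixed, not the diameter.

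The missing idea is algebraic and concerns the other endpoint of a fixed edge. An edge $g\langle a_i\rangle$ joins a vertex of type $x_i$, where $a_i$ is the base generator of $BS(m_{i-1},n_{i-1})$, to a vertex of type $x_{i+1}$, where $a_i$ is the stable letter of $BS(m_i,n_i)$. Use the homomorphism $\phi\colon G_{x_{i+1}}\to\Z$ with $a_i\mapsto1$, $a_{i+1}\mapsto0$.

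\begin{enumerate}
\item The condition $c^{-1}a_i^kc\in\langle a_i\rangle$ then forces $c^{-1}a_i^kc=a_i^k$.
\item The centralizer of $a_i^k$ acts on the axis of $a_i$ in the Bass--Serre tree by translations. Since $|m_i|\ne|n_i|$, no nontrivial element fixes that axis pointwise.
\item So the centralizer is infinite cyclic and contains $a_i$ with $\phi(a_i)=1$, hence equals $\langle a_i\rangle$. Thus $c\langle a_i\rangle=\langle a_i\rangle$.
\end{enumerate}

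Every type-$x_{i+1}$ endpoint is therefore a leaf of $\Fix(h)$. So $\Fix(h)$ is a single vertex or a star of same-type edges, of diameter at most twice the maximal edge length. The analogous map on $G_{x_i}$ (sending $a_{i-1}\mapsto1$, $a_i\mapsto0$) also gives what you actually need for the mixed-type case: $b\langle a_{i-1}\rangle b^{-1}\cap c\langle a_i\rangle c^{-1}=1$ for all $b,c$, not just $\langle a_{i-1}\rangle\cap\langle a_i\rangle=1$. Without this argument or an explicit citation, the only nontrivial part of the lemma remains unproved.
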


\begin{proof}
In the generalized Higman case, the action of $G$ on $X$ is weakly acylindrical by
\cite[Lemma~2.7]{HorHua1}. In the $4$-generated Higman-like case, let $h\in G$ be nontrivial and suppose that
$h$ fixes two points $x,y\in X$. Then \cite[Corollary~2.3]{Marti1} implies that 
$\Fix(h)$ is either a single vertex or a union of edges contained in the star of a single vertex. 
In particular, $\Fix(h)$ has 
diameter at most $2$. Therefore, if $d(x,y)>2$, no nontrivial element of $G$ can fix
both $x$ and $y$. Thus the action of $G$ on $X$ is weakly acylindrical in this case as well. Denote by $R,N> 0$ be the corresponding constants of weak acylindricity for $G$.

Let $x,y\in X$ be any two points with $d(x,y)\ge R$. Denote by
$
S:=\Stab_{\Aut(G)}(x)\cap \Stab_{\Aut(G)}(y)$ and 
$K:=\Stab_G(x)\cap \Stab_G(y)$
the common stabilizers of $x$ and $y$ in $\Aut(G)$ and $G$, respectively. In particular, $|K|\le N$. To estimate the cardinality of $S$, we observe that $K=S\cap G$. In particular, $K$ is the kernel of the homomorphism from $S$ to $F_\sigma$ induced by the natural projection $\Aut(G)=G\rtimes F_\sigma\to F_\sigma$. Moreover, the image of $S\to F_\sigma$ is a subgroup of the finite group $F_\sigma$. Therefore, $|S|=|K|\cdot|\im(S\to F_\sigma)|\le N|F_\sigma|$, i.e.\ the action of $\Aut(G)$ on $X$ is weakly acylindrical.
\end{proof}

\begin{lem}\label{lem:virtually-cyclic-common}
The groups $G$ and $\Aut(G)$ are not virtually cyclic.
\end{lem}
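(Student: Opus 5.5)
We will show that $G$ is not virtually cyclic; the statement for $\Aut(G)$ then follows. By Subsection~\ref{subsec:aut-action} we have $\Aut(G)\simeq G\rtimes F_\sigma$ with $F_\sigma$ finite, so $G$ has finite index in $\Aut(G)$. If $\Aut(G)$ were virtually cyclic, its finite-index subgroup $G$ would be virtually cyclic as well, which we will have ruled out.

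To see that $G$ is not virtually cyclic, we use a vertex group. By the injectivity results recalled above (\cite[Lemma~2.1]{HorHua1} in the generalized Higman case, and developability of Martin's square of groups \cite[Section~1.1]{Marti1} for $n=4$), each $G_{x_i}\simeq BS(m_{i-1},n_{i-1})$ embeds in $G$. Since subgroups of virtually cyclic groups are virtually cyclic, it suffices to show that $BS(m,n)$ with $|m|\neq|n|$ is not virtually cyclic.

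For this, let $BS(m,n)=\langle a,t\mid ta^mt^{-1}=a^n\rangle$. By Britton's lemma, $a$ has infinite order. Suppose $BS(m,n)$ were virtually cyclic, hence virtually $\Z$, since it is infinite. Then the infinite-order elements $a$ and $t$ have powers lying in a common infinite cyclic subgroup $C$ of finite index, so $a^p$ and $t^q$ are commensurable and $\langle a\rangle\cap\langle t\rangle\neq 1$. This contradicts the normal form for HNN extensions: a nontrivial power of $t$ has nonzero $t$-exponent under the map $BS(m,n)\to\Z$, $a\mapsto 0$, $t\mapsto 1$, while powers of $a$ lie in the kernel.

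An alternative route avoids commensurability. In a virtually-$\Z$ group, if $a$ has infinite order and $ta^mt^{-1}=a^n$, then restricting to a finite-index normal infinite cyclic subgroup and taking translation lengths (equivalently, the homomorphism to $\Z$ up to finite index) forces $|m|=|n|$. This contradicts $|m|\neq|n|$.

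The main thing to check carefully is this last algebraic step, namely that $BS(m,n)$ is not virtually $\Z$. I would use the translation-length argument: conjugation preserves the stable length of $a$, which gives $|m|\,\ell(a)=|n|\,\ell(a)$ with $\ell(a)>0$.
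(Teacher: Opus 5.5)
Your proof is correct and essentially the same as the paper's: both embed a vertex group $BS(m,n)$ in $G$ and use the homomorphism $BS(m,n)\to\Z$ killing the base generator to rule out a finite-index cyclic subgroup. The paper phrases this as $\langle a\rangle$ having infinite index, while you phrase it as $a$ and $t$ sharing a nontrivial common power; both then pass to $\Aut(G)$ because $G\le\Aut(G)$. The finite index of $G$ in $\Aut(G)$ is not actually needed, and, like the paper's, your main argument does not use $|m|\neq|n|$.
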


\begin{proof}
The group $G$ contains standard vertex subgroups isomorphic to
$BS(m_i,n_i)$ for all $i\in\mathbb Z/n\mathbb Z$.
Since any subgroup of a virtually cyclic group is virtually cyclic,  it is 
enough to see that $BS(m_i,n_i)$ is not virtually cyclic.
Recall that $BS(m_i,n_i)$ has presentation $BS(m_i,n_i)=\langle s,t\mid st^{m_i}s^{-1}=t^{n_i}\rangle$. The map
\[
BS(m_i,n_i)\to \Z,\qquad s\mapsto 1,\quad t\mapsto 0
\]
is a surjective homomorphism. Since $\langle t\rangle$ lies in its kernel, the subgroup $\langle t\rangle$
has infinite index in $BS(m_i,n_i)$, and by Britton's lemma, the element $t$ has infinite order. If $BS(m_i,n_i)$ were virtually cyclic, there would exist a cyclic finite-index 
subgroup $C\le BS(m_i,n_i)$. 
Then $\langle t \rangle\cap C$ would be infinite, and therefore have finite index in $C$ and hence in $BS(m_i,n_i)$, a contradiction.
Therefore $BS(m_i,n_i)$ is not virtually cyclic, and neither is $G$.

Since $G$ is a subgroup of $\Aut(G)$, the group $\Aut(G)$ cannot be virtually cyclic either.
\end{proof}

\begin{lem}\label{lem:no-global-fixed-point}
The action of $G$ on $X$ has no global fixed point. Consequently, the action
of $\Aut(G)$ on $X$ has no global fixed point either.
\end{lem}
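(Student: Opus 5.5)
The plan is to argue by contradiction, using the structure of the development $X$ together with the fact that the action of $G$ on $X$ is free on $2$-cells. Suppose $x\in X$ is fixed by every element of $G$. Since $G$ acts by cellular isometries without inversion, $G$ stabilizes the open cell containing $x$ in its interior, and hence fixes that cell pointwise; in particular $G$ fixes some vertex of this cell. We therefore reduce to two cases, according to whether $x$ lies in an open $2$-cell or not. If $x$ lies in the interior of a $2$-cell, then $G$ is contained in the stabilizer of that $2$-cell. By construction, $2$-cells correspond to left cosets of the trivial subgroup, so this stabilizer is trivial. This contradicts $G\neq 1$, which holds for instance because $G$ contains $BS(m_i,n_i)$.

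Otherwise $G$ fixes a vertex $gG_{x_i}$, so $G=\Stab_G(gG_{x_i})=gG_{x_i}g^{-1}$, and hence $G_{x_i}=G$. We must rule this out. The cleanest route is to use the generators directly. The vertex group $G_{x_i}=\langle a_{i-1},a_i\rangle$, and $G$ is generated by $a_1,\dots,a_n$ with $n\ge4$. To see that some generator $a_j$ with $j\notin\{i-1,i\}$ does not lie in $G_{x_i}$, I would use the geometry: $a_j$ lies in $G_{x_j}$ and $G_{x_{j+1}}$. If $a_j$ also lay in $G_{x_i}$, then $a_j$ would fix the vertices $x_i$ and $x_j$ of the base polygon. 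In a $\CAT(0)$ or $\CAT(-1)$ space, the fixed set of an isometry is convex, so $a_j$ would fix the geodesic from $x_i$ to $x_j$. For nonadjacent vertices this geodesic passes through the interior of the base $2$-cell (the polygon is convex in its own metric, and I expect the geodesic in $X$ to stay in that cell since the cell is locally convex). Then $a_j$ would fix a point in an open $2$-cell, hence fix that cell, and so be trivial. This is a contradiction, since $a_j$ has infinite order.

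The main obstacle is justifying that the geodesic between nonadjacent corners of the base polygon meets the open $2$-cell. If that step is delicate, I would instead use the alternative fact that $\Fix(a_j)$ is contained in a small region. For $n=4$, \cite[Corollary~2.3]{Marti1} says fixed sets of nontrivial elements lie in the star of a vertex, while for $n\ge5$ one can use weak acylindricity with the fact that the powers $a_j^k$ fix the same points. A more robust alternative is to map $G$ onto something that separates the generators, though since Higman-type groups have few quotients, I prefer the geometric argument.

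The statement about $\Aut(G)$ is then immediate: a point fixed by all of $\Aut(G)$ would be fixed by the subgroup $\Inn(G)\cong G$. Under the identification of Section~\ref{subsec:aut-action}, this subgroup acts as the original $G$-action, so such a point cannot exist.
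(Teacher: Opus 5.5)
Your proof is correct, but in the vertex case it takes a different route from the paper. The paper rules out $G=G_{x_i}$ with an isomorphism invariant: each defining relation gives $(m_i-n_i)\bar a_{i+1}=0$ in $G^{ab}$, so $G^{ab}$ is finite, whereas $BS(m,n)^{ab}\cong\Z\oplus\Z/|m-n|\Z$ is infinite. You instead show directly that $G_{x_i}$ is a proper subgroup, using convexity of fixed sets and the freeness of the action on $2$-cells. Your reduction of the edge case to the vertex case (endpoints of a pointwise-fixed edge are fixed) also replaces the paper's separate remark that edge stabilizers are cyclic.

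The step you flag is true and is easier than a link computation. Choose $j=i+2$ specifically. An arbitrary $j\notin\{i-1,i\}$ will not do, since $x_{i+1}$ is adjacent to $x_i$; for $n\ge4$, $x_{i+2}$ is not. The straight diagonal of the base polygon from $x_i$ to $x_{i+2}$ has its interior in the open $2$-cell, where small balls of $X$ are isometric to balls in $\mathbb{E}^2$ or $\H^2$. So it is a local geodesic of $X$, hence the geodesic, since local geodesics in $\CAT(0)$ spaces are geodesics \cite[Proposition~II.1.4(2)]{BriHae1}.

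What each approach buys: yours proves slightly more, namely that the stabilizers of two non-adjacent corners of a common $2$-cell intersect trivially. The paper's is purely algebraic and needs no metric input beyond the cell structure.

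Drop the weak-acylindricity fallback, because it does not work as stated. The distance $d(x_i,x_{i+2})$ is at most the diameter of a single polygon and need not exceed the constant $R$. So having infinitely many powers $a_j^k$ fix both points gives no contradiction.
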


\begin{proof}
Assume that $G$ fixes a point $p\in X$, and let $c$ be the unique open cell containing $p$.
Since the action is without inversion, every element preserving a cell setwise fixes it pointwise
\cite[Section~2.2]{HorHua1}, \cite[Section~1.1]{Marti1}. Hence $G$ fixes the cell $c$ pointwise, so
$G\le \Stab_G(c)$.

If $c$ is a $2$-cell, then $\Stab_G(c)=1$, impossible.
If $c$ is an edge, then $\Stab_G(c)$ is conjugate to an edge group and hence is infinite cyclic,
whereas $G$ contains a vertex subgroup isomorphic to $BS(m_i,n_i)$ and is therefore
not cyclic.

Assume now that $c$ is a vertex $v$. Then $G=\Stab_G(v)=G_v$. 
We show that this is impossible. 
Indeed, vertex stabilizers are conjugates of the vertex groups, so after conjugating we may
assume that $G$ is isomorphic to $BS(m_i,n_i)$ for some $i$.
We now look at abelianizations. For every $i\in \Z/n\Z$, denote by $\bar a_i$ the image of the generator $a_i$ in the 
abelianization $G^{ab}$. Then in $G^{ab}$ the defining relations become
\[
(m_i-n_i)\,\bar a_{i+1}=0
\quad\text{for all } i\in \Z/n\Z.
\]
Since $m_i\neq n_i$ for every $i$, each generator has finite order in $G^{ab}$, and hence $G^{ab}$ is finite.

On the other hand,
\[
BS(m,n)^{ab}\simeq \Z\oplus \Z/|m-n|\Z
\]
for every pair $(m,n)$ with $m\neq n$, and, in particular, $BS(m_i,n_i)^{ab}$ is infinite.
Thus $G$ cannot be isomorphic to any group $BS(m_i,n_i)$.
This contradiction shows that $G$ has no global fixed point in $X$.

Since the restriction of the $\Aut(G)$-action to $\Inn(G)$ is the given action of
$G$, any point of $X$ fixed by $\Aut(G)$ would in particular be fixed by $G$.
Thus $\Aut(G)$ cannot fix a point of $X$.
\end{proof}

\section{Acylindrical hyperbolicity of \texorpdfstring{$\Aut(G)$}{Aut(G)} and the proof of Theorem~\ref{thm1}}\label{sec4}
In this section we prove Theorem~\ref{thm1}. The key point is that, in both families
introduced in Section~\ref{sec1}, the group $\Aut(G)$ is acylindrically hyperbolic.
Once this is known, Corollary~\ref{cor:aut-ah-implies-rinfty} applies.

\subsection{Proof for the generalized Higman groups with \texorpdfstring{$n\ge5$}{n>=5}}

Assume that $G=H_\sigma$ is a generalized Higman group with $n\ge5$.

\begin{prop}\label{prop:aut-ah-generalized}
The group $\Aut(G)$ is acylindrically hyperbolic.
\end{prop}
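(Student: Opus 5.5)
We will show that the action of $\Aut(G)$ on the complex $X=X_\sigma$ is acylindrical and non-elementary. Since $X$ is a complete geodesic $\CAT(-1)$ space, it is Gromov-hyperbolic, so this will prove the statement. We will use the isomorphism $\Aut(G)\simeq \widehat G = G\rtimes F_\sigma$ and the fact that $G\simeq\Inn(G)$ is a finite-index subgroup of $\Aut(G)$. Everything will be reduced to properties of the action of $G$ on $X$ together with Lemmas~\ref{lem:Aut-weak-acyl}, \ref{lem:virtually-cyclic-common} and~\ref{lem:no-global-fixed-point}.

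\textbf{Step 1: acylindricity.} We will upgrade the weak acylindricity of Lemma~\ref{lem:Aut-weak-acyl} to full acylindricity. The natural route uses the cellular structure of $X$ with finitely many isometry types of cells, and the fact that the action is by cellular isometries without inversion; here we restrict to $G$, which acts without inversion (elements of $\Aut(G)$ outside $G$ may act by rotations, but only with the bounded index $|F_\sigma|$). Given $\varepsilon>0$, fix $x,y$ far apart and suppose $g$ moves both by at most $\varepsilon$. By convexity of the displacement function in a $\CAT(-1)$ (hence $\CAT(0)$) space, $g$ moves every point of $[x,y]$ by at most $\varepsilon$. In a $\CAT(-1)$ piecewise hyperbolic complex with finitely many shapes, an isometry with small displacement along a long geodesic must, by thinness of triangles, nearly fix a long middle subsegment. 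Since the set of cells meeting an $\varepsilon$-ball is uniformly finite, a standard argument (as in Martin's acylindricity result for $H_n$ \cite{Marti2}, or the argument of \cite[Lemma~2.7]{HorHua1}) shows the following. There are only boundedly many possibilities for how $g$ permutes the cells along a middle portion of $[x,y]$, and any two elements inducing the same permutation differ by an element fixing two points at distance at least $R$. By weak acylindricity, each such fiber has cardinality at most $N$. This gives a uniform bound on the number of such $g$ in $G$, and multiplying by $|F_\sigma|$ extends it to $\Aut(G)$. This is the main obstacle. The key technical point is controlling "almost fixed" versus "fixed"; I would first try to use the uniform positive lower bound on displacement for elliptic-free behaviour, namely that a cellular isometry either fixes a point of a cell or moves it by at least some $\delta>0$ depending only on the finitely many cell shapes. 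If \cite{Marti2} or \cite{HorHua1} already gives acylindricity of the $G$-action, I will simply cite it and pass to the finite-index overgroup.

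\textbf{Step 2: non-elementarity.} By Osin's trichotomy \cite[Theorem~1.1]{Osin1}, an acylindrical action of a group on a hyperbolic space either has bounded orbits, or is elementary with the group virtually cyclic, or is non-elementary. Lemma~\ref{lem:virtually-cyclic-common} excludes the virtually cyclic case. To exclude bounded orbits, we use that a bounded orbit in a complete $\CAT(0)$ space gives a fixed point (the circumcenter), which is excluded by Lemma~\ref{lem:no-global-fixed-point}. Therefore the action of $\Aut(G)$ on $X$ is non-elementary and acylindrical, and $\Aut(G)$ is acylindrically hyperbolic.
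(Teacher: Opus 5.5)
Your Step~2 is the paper's argument: Osin's trichotomy, the circumcenter fixed point for bounded orbits, and Lemmas~\ref{lem:virtually-cyclic-common} and~\ref{lem:no-global-fixed-point}. Step~1, however, has a genuine gap.

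The claim that the set of cells meeting an $\varepsilon$-ball is uniformly finite is false, because $X$ is not locally finite. Edge stabilizers are infinite cyclic and $2$-cell stabilizers are trivial. So each edge $gG_{e_i}$ lies in the infinitely many polygons indexed by the elements of $gG_{e_i}$, and vertex links are infinite graphs. Consequently there is no bound on the number of ways $g$ can move the cells along the middle of $[x,y]$, and the counting argument gives nothing.

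Your proposed fix does not close the gap either. Read pointwise, it is false that a cellular isometry either fixes a point or moves it by at least $\delta$. For example, $a_i$ fixes the edge $G_{e_i}$ and moves points of the polygon indexed by $1$ lying near that edge into the polygon indexed by $a_i$, by arbitrarily small amounts. Read globally, as Bridson's lower bound on translation lengths of hyperbolic cellular isometries of complexes with finitely many shapes, it only excludes hyperbolic elements with tiny translation length. It says nothing about elliptic elements, potentially infinitely many coming from the infinite vertex and edge stabilizers, that fix a point near the middle of $[x,y]$ and move $x$ and $y$ by at most $\varepsilon$. The real difficulty is to show that such elements actually fix two points at distance at least $R$, so that weak acylindricity applies. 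That is a genuine theorem, not a local-finiteness count.

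The paper handles this as follows. It passes to the second barycentric subdivision $X_\Delta$, which is a $\CAT(-1)$ piecewise hyperbolic simplicial complex with finitely many shapes, on which all of $\Aut(G)$ acts by simplicial isometries. It then invokes \cite[Theorem~E]{MarPrz1}: for such actions, weak acylindricity (Lemma~\ref{lem:Aut-weak-acyl}) upgrades to acylindricity. This applies to $\Aut(G)$ directly, so there is no need to route through $G$.

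Your fallback of citing acylindricity of the $G$-action is not available in general: \cite{Marti2} proves it only for the classical $H_n$. Separately, to transfer acylindricity from $G$ to a finite-index overgroup you must use the constants for $2\varepsilon$ before multiplying by $|F_\sigma|$. This is because for two elements $g,g_0$ in the same coset, $gg_0^{-1}$ moves $x$ and $y$ by up to $2\varepsilon$.
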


\begin{proof}
By Section~\ref{sec3}, the complex $X$ is a geodesic Gromov-hyperbolic space, and $\Aut(G)$ acts on $X$ by cellular isometries. By Lemma~\ref{lem:Aut-weak-acyl}, this action is weakly acylindrical. Passing to the second barycentric subdivision $X_\Delta$ of $X$, we obtain a $2$-dimensional simplicial complex which is still $\CAT(-1)$ and piecewise hyperbolic, and has finitely many isometry types of simplices; see \cite[proof of Lemma~5.5]{HorHua1}. The subdivision is taken by
geodesic simplices and does not change the underlying metric space. Hence the
weak acylindricity constants from Lemma~\ref{lem:Aut-weak-acyl} still apply to
the same isometric action, viewed on $X_\Delta$. Therefore
\cite[Theorem~E]{MarPrz1} applies and shows that the action of $\Aut(G)$ on the
metric space underlying $X_\Delta$, equivalently on $X$, is acylindrical. 

We next show that the action of $\Aut(G)$ on $X$ is non-elementary.
If it had bounded orbits, then by~\cite[Corollary~II.2.8(1)]{BriHae1} it would have a global
fixed point, contrary to Lemma~\ref{lem:no-global-fixed-point}.
On the other hand, $\Aut(G)$ is not virtually cyclic by Lemma~\ref{lem:virtually-cyclic-common}.
Therefore, by the trichotomy for acylindrical actions on hyperbolic spaces
\cite[Theorem~1.1]{Osin1}, the action of $\Aut(G)$ on $X$ is non-elementary.

Hence $\Aut(G)$ admits a non-elementary acylindrical action on a geodesic
Gromov-hyperbolic space, so $\Aut(G)$ is acylindrically hyperbolic.
\end{proof}

Now we can finish the proof of Theorem~\ref{thm1} (1).
Since $\Inn(G)\simeq G$ is infinite, Proposition~\ref{prop:aut-ah-generalized} 
and Corollary~\ref{cor:aut-ah-implies-rinfty} imply that $G$ has property $R_\infty$.\qed

\subsection{Proof for the \texorpdfstring{$4$}{4}-generated Higman-like groups}

Assume now that
\[
G=H(m_i)_{i\in \Z/4\Z}=H_\sigma
\quad\text{for}\quad
\sigma=\bigl((1,m_1),(1,m_2),(1,m_3),(1,m_4)\bigr),
\]
with $m_i\ge2$ for all $i$.

Recall that an action of a group $\Gamma$ on a $\CAT(0)$
cube complex $Y$ is called \emph{essential} if no $\Gamma$-orbit remains in a bounded
neighborhood of a half-space of $Y$, i.e.\ if there do not exist a half-space $\mathfrak h$, a point $x\in Y$ 
and a constant $R<\infty$ such that the orbit $\Gamma x$ is contained in $\{y\in Y : d(y,\mathfrak h)\le R\}$, 
see~\cite[p.~852 and Section~3.3]{CapSag1}.
 
In what follows, $\partial_\infty Y$ denotes the boundary at infinity of a $\CAT(0)$ cubical complex $Y$, 
i.e.\ the set of asymptotic classes of geodesic rays in $Y$, see~\cite[Chapter~II.8]{BriHae1}.

\begin{martincorg}[\cite{Marti2}]
Let $\Gamma$ be a group acting non-uniformly weakly acylindrically on a $\CAT(0)$ square complex $Y$ such that 
\begin{enumerate} 
\item the action is essential, 
\item $\Gamma$ has no fixed point in $Y\cup \partial_\infty Y$, and 
\item $Y$ is not the product of two unbounded trees. 
\end{enumerate} 
Then $\Gamma$ contains strongly contracting elements, every such element satisfies the WPD condition, and in particular $\Gamma$ is either virtually cyclic or acylindrically hyperbolic.
\end{martincorg}

To apply this criterion to the action of $\Aut(G)$ on the CAT(0) square complex $X$, we need to verify the three hypotheses above. A key step is to verify first the corresponding properties for the action of $G$ on $X$. In~\cite[Remark~4.9]{Marti1}, Martin mentions that the action of $G$ on $X$ is essential and that it has no fixed point in $\partial_\infty X$. For completeness, we provide detailed proofs of these statements.

\begin{lem}\label{lem:g-essential}
Let $G$ be a $4$-generated Higman-like group and let $X$ be its associated
$\CAT(0)$ square complex. Then the action of $G$ on $X$ is essential.
\end{lem}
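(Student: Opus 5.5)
We prove essentiality by contradiction, using the structure of hyperplanes in $X$ together with the local structure of $X$ at vertices. Suppose there are a half-space $\mathfrak h$, a point $x\in X$ and $R<\infty$ with $Gx\subseteq N_R(\mathfrak h)$, where $N_R(\mathfrak h)=\{y : d(y,\mathfrak h)\le R\}$. Let $\hat{\mathfrak h}$ be the bounding hyperplane and $\mathfrak h^*$ the complementary half-space. The strategy is to find an element $g\in G$ which skewers $\hat{\mathfrak h}$, i.e.\ $g\mathfrak h^*\subsetneq \mathfrak h^*$ with the translates $g^k\mathfrak h^*$ nested and the distance from $g^kx$ to $\mathfrak h$ growing linearly in $k$. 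Such a $g$ gives $d(g^kx,\mathfrak h)\to\infty$, contradicting $Gx\subseteq N_R(\mathfrak h)$. Recall that an element skewers a hyperplane if it has an axis crossing that hyperplane exactly once; then the orbit of any point along that axis leaves every bounded neighborhood of both half-spaces.

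\textbf{Step 1: every hyperplane is dual to an edge of a standard type.} Since $G$ acts cocompactly with quotient $K$ a single square, each hyperplane of $X$ is a $G$-translate of a hyperplane dual to one of the edges $e_i$ of the base square. It therefore suffices to treat hyperplanes crossing the base square $C$ (stabilized trivially). Such a hyperplane crosses $C$ through the midpoints of two opposite edges, say $e_i$ and $e_{i+2}$ with stabilizers $\langle a_i\rangle$ and $\langle a_{i+2}\rangle$. By the $G$-invariance of essentiality, we may take $\hat{\mathfrak h}$ to be this hyperplane.

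\textbf{Step 2: produce a loxodromic skewering element.} The natural candidate is $g=a_{i+1}a_{i+3}$, or some similar product of generators whose vertex stabilizers lie on opposite sides of $\hat{\mathfrak h}$. Note that $a_{i+1}$ fixes the vertex $x_{i+2}$ (it lies in $G_{x_{i+2}}$ and $G_{x_{i+1}}$, hence fixes the edge $e_{i+1}$), which lies on one side of $\hat{\mathfrak h}$. Similarly $a_{i+3}$ fixes $e_{i+3}$, which lies on the other side. Each of these generators is elliptic and moves $\hat{\mathfrak h}$ off itself, since $a_{i+1}\notin\langle a_i\rangle$ and the fixed set of $a_{i+1}$ is contained in the star of a vertex by \cite[Corollary~2.3]{Marti1}. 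This gives a ping-pong configuration: $a_{i+1}$ sends $\mathfrak h^*$ strictly into $\mathfrak h$, and $a_{i+3}$ sends $\mathfrak h$ strictly into $\mathfrak h^*$, or the appropriate variant. A standard ping-pong argument then shows that the product $g$ satisfies $g\mathfrak h\subsetneq\mathfrak h$, with the translates $g^k\hat{\mathfrak h}$ pairwise disjoint and nested. Since $X$ has finitely many shapes of cells, disjoint nested hyperplanes are at uniformly positive distance from each other, so $d(g^kx,\mathfrak h^*)\to\infty$ and $d(g^{-k}x,\mathfrak h)\to\infty$. This yields the contradiction, applied to whichever half-space was chosen.

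\textbf{Main obstacle.} The hard part is verifying the strict half-space inclusions in Step 2. This amounts to checking, in the link of $x_{i+2}$, that the nontrivial rotation by $a_{i+1}$ sends the edge dual to $\hat{\mathfrak h}$ to a different edge of the link, and that the image hyperplane lies entirely inside one half-space, i.e.\ that the two hyperplanes do not cross. Two hyperplanes through a common vertex either cross in a square or are disjoint. Martin's description of the vertex links as complete bipartite-like graphs coming from the Bass--Serre trees of $BS(1,m)$ should show that $a_{i+1}e_i$ and $e_i$ are not adjacent to a common square edge in the relevant direction, so that the image hyperplane does not cross $\hat{\mathfrak h}$. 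I would try to verify this first by direct inspection of the link using Britton's lemma in $G_{x_{i+1}}\cong BS(1,m_i)$. If that fails, the fallback is to use the CAT(0) geometry directly: show that $g$ is hyperbolic with an axis meeting $\hat{\mathfrak h}$ transversally, using that the fixed sets of $a_{i+1}$ and $a_{i+3}$ are at distance at least $1$ from each other.
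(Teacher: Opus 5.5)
\textbf{Verdict: correct, by a different route.} The paper's argument is global. It places the edge dual to $H$ inside one of Martin's flats $F$ and notes that $H\cap F$ is a line. It uses the fact that every $G$-orbit meets every square, since the square is a strict fundamental domain. It then transfers distances from $F$ to $X$ by a median-graph argument, which needs combinatorial convexity of $F$ \cite[Lemma~4.8]{Marti1} and of hyperplane carriers, and finishes with the quasi-isometry between $X^{(1)}$ and the $\CAT(0)$ metric. You instead work locally and build an explicit element $g=a_{i+1}a_{i+3}$ that skewers the hyperplane. This works.

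\textbf{Your flagged obstacle is immediate.} Every square at a vertex $hx_{i+1}$ has the form $h'C$, so it contains exactly one translate of $e_i$ and one of $e_{i+1}$. Hence links are bipartite. Only bipartiteness is used; the links are not complete bipartite by \cite[Corollary~4.6]{Marti1}, so your phrase ``complete bipartite-like'' should go. Since $\Stab_G(e_i)=\langle a_i\rangle\not\ni a_{i+1}$, the edges $e_i\neq a_{i+1}e_i$ at $x_{i+1}$ span no square. Therefore their dual hyperplanes are distinct (no self-osculation) and disjoint (no inter-osculation in $\CAT(0)$ cube complexes). It is the edge stabilizer, not \cite[Corollary~2.3]{Marti1}, that gives $a_{i+1}\hat{\mathfrak h}\neq\hat{\mathfrak h}$.

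To get the inclusion, check the quadrants:
\begin{itemize}
\item $x_{i+1}$ lies in $\mathfrak h\cap a_{i+1}\mathfrak h$.
\item $x_i$ is separated from $x_{i+1}$ only by $\hat{\mathfrak h}$, so it lies in $\mathfrak h^*\cap a_{i+1}\mathfrak h$.
\item $a_{i+1}x_i$ is separated from $x_{i+1}$ only by $a_{i+1}\hat{\mathfrak h}$, so it lies in $\mathfrak h\cap a_{i+1}\mathfrak h^*$.
\end{itemize}
For disjoint hyperplanes some quadrant is empty, so it must be $\mathfrak h^*\cap a_{i+1}\mathfrak h^*$. This gives $a_{i+1}\mathfrak h^*\subsetneq\mathfrak h$. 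The same argument at $x_i$ gives $a_{i+3}\mathfrak h\subsetneq\mathfrak h^*$, and hence $g\mathfrak h\subsetneq\mathfrak h$.

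\textbf{Distance growth.} Your ``uniformly positive distance'' step is better replaced by a count. For any vertex $y\in\mathfrak h$, the hyperplanes $g\hat{\mathfrak h},\dots,g^k\hat{\mathfrak h}$ are $k$ distinct hyperplanes separating $g^ky$ from $\mathfrak h^*$. Then apply the same quasi-isometry the paper uses. The same argument with $g^{-1}$ handles the other half-space.

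\textbf{What each route buys.} Your route avoids flats and all convexity lemmas, using only link structure and standard hyperplane facts. It also proves more: every hyperplane is skewered by an element of $G$, namely a product of two standard generators, which gives explicit loxodromics. The paper's route shows directly that every orbit reaches squares arbitrarily deep on both sides. The price is importing Martin's flats and the convexity results needed to compare distances in $F$ with distances in $X$.
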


\begin{proof}
Fix a hyperplane $H$ of $X$ and let $e$ be an edge dual to $H$.
By Martin's construction~\cite[Section~4.2]{Marti1}, there exists an isometrically embedded flat $F\simeq \R^2$,
which is a square subcomplex of $X$ isometric to the standard square tiling of the Euclidean plane.
Let $C$ be a square containing $e$. Since all squares of $X$ lie in a single $G$-orbit,
there exists a translate of $F$ containing $C$; replacing $F$ by that translate,
we may assume that $F$ contains $e$.
Since $F$ is an isometrically embedded subcomplex, by~\cite[Section~2.6]{Hage1},
$L:=H\cap F$ is nonempty, connected, and separates $F$ into two components.
Since each square of $F$ is a square of $X$, the intersection $L$ meets each square
of $F$ either trivially or in a single midsegment. Hence $L$ is a
hyperplane of the standard square tiling $F$, consisting of midsegments, and therefore a bi-infinite line.

Since $F$ is a Euclidean square complex and $L$ separates $F$ into two half-planes, each component of $F\setminus L$ contains squares $C_n$ with $d_F(C_n,L)\to\infty$. On the other hand, $G$ acts on $X$ with a square as a strict fundamental domain, so for any point $x\in X$, the orbit $Gx$ meets every square of $X$, and, in particular, meets each $C_n$. Therefore every $G$-orbit contains points arbitrarily far from $L$.

To conclude essentiality, it remains to compare distance to $L$ inside $F$ with
distance to $H$ inside $X$. Rather than doing this in the $\CAT(0)$ metric, we
switch to the combinatorial metrics on $F^{(1)}$ and $X^{(1)}$.

Let $N(H)$ be the carrier of $H$, i.e.\ the union of all closed cubes of $X$ that meet $H$,
and let $S:=F\cap N(H)$. Then $S$ is the carrier in $F$ of the line $L$. 
We will prove that 
\begin{equation}\label{eq*}
d_{X^{(1)}}(x,N(H))=d_{F^{(1)}}(x,S)\tag{*}
\end{equation}
for every vertex $x\in F$, using the median graph structure on $X^{(1)}$, whose definition we now recall.

A connected graph $\Gamma$ is called a \emph{median graph} if for every triple of
vertices $x,y,z\in \Gamma$ there exists a unique vertex $m=m(x,y,z)$, called the 
\emph{median} of $x,y,z$, such that
$m \in I(x,y)\cap I(y,z)\cap I(z,x)$,
where $I(u,v)$ denotes the set of vertices lying on edge-path geodesics from
$u$ to $v$. Chepoi proved in~\cite[Theorem~6.1]{Chepo1} that the $1$-skeleton of every $\CAT(0)$ cube complex
is a median graph. In particular, $X^{(1)}$ is a median graph.

If $Y$ is a subcomplex of a $\CAT(0)$ cube complex $X$, we say that $Y$ is
\emph{combinatorially convex} if $Y^{(1)}$ is convex in $X^{(1)}$ with respect to the
edge-path metric, that is, every edge-path geodesic in $X^{(1)}$ between
two vertices of $Y^{(1)}$ is contained in $Y^{(1)}$.

Let $x\in F^{(1)}$. The inequality $d_{X^{(1)}}(x,N(H))\le d_{F^{(1)}}(x,S)$
is immediate since $S\subseteq N(H)$. For the reverse inequality, choose a vertex
$y\in N(H)$ such that $d_{X^{(1)}}(x,y)=d_{X^{(1)}}(x,N(H))$,
and choose any vertex $z\in S=F\cap N(H)$.

Lemma 4.8 of~\cite{Marti1} implies that $F$ is combinatorially convex in $X$, and
carriers of hyperplanes are combinatorially convex by \cite[Lemma~1.4]{Marti2}.
Hence every geodesic in $X^{(1)}$ from $x$ to $z$ lies in $F$, and every geodesic in
$X^{(1)}$ from $y$ to $z$ lies in $N(H)$. Therefore the median $m=m(x,y,z)$
satisfies $m\in I(x,z)\cap I(y,z)\subseteq F\cap N(H)=S$.
On the other hand, $m\in I(x,y)$. If $m\neq y$, then $d_{X^{(1)}}(x,m)<d_{X^{(1)}}(x,y)$,
while $m\in N(H)$, contradicting the choice of $y$. 
Thus $m=y$, so $y\in S$. Hence $d_{X^{(1)}}(x,N(H))\ge d_{X^{(1)}}(x,S)$.
Since $S\subseteq F$ and $F$ is combinatorially convex, every geodesic in $X^{(1)}$
from $x$ to a vertex of $S$ lies in $F$, so $d_{X^{(1)}}(x,S)=d_{F^{(1)}}(x,S)$.
This proves equality~\eqref{eq*}.

We have proved that each of the two half-spaces bounded by $H$ contains vertices
arbitrarily far from $N(H)$ in the combinatorial metric of $X^{(1)}$. Finally, since $X$
is finite-dimensional, the combinatorial metric on $X^{(1)}$ and the $\CAT(0)$ metric
on $X$ are quasi-isometric; see~\cite[Lemma~2.2]{CapSag1}. 
Therefore, each of the two half-spaces bounded by $H$ contains, for every $x\in X$, points of the orbit $Gx$ arbitrarily far from $H$ in the $\CAT(0)$ metric.
Since $H$ was arbitrary, this means that the action of $G$ on $X$ is essential.
\end{proof}

\begin{lem}\label{lem:g-no-fixpt-on-bdry}
Let $G$ be a $4$-generated Higman-like group and let $X$ be its associated
$\CAT(0)$ square complex. Then the action of $G$ on $X$ has no fixed point in $\partial_\infty X$.
\end{lem}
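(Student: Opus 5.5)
Suppose, for a contradiction, that $G$ fixes a point $\xi\in\partial_\infty X$. The plan is to use the vertex subgroups $G_{x_i}\simeq BS(1,m_{i-1})$ together with the flats from Martin's construction. First, I would show that a vertex subgroup cannot fix $\xi$ unless $\xi$ is quite special. Take a vertex $v$ with stabilizer $G_v=\langle a_{i-1},a_i\rangle$. Every element of $G_v$ fixes $v$, so each $g\in G_v$ fixes $\xi$ and $v$, and therefore fixes pointwise the geodesic ray $[v,\xi)$, since geodesic rays from a given basepoint to a given boundary point are unique in a complete $\CAT(0)$ space. Hence all of $G_v$ fixes the ray $[v,\xi)$ pointwise. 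But by \cite[Corollary~2.3]{Marti1} (as used in Lemma~\ref{lem:Aut-weak-acyl}), a nontrivial element of $G$ has fixed set of diameter at most $2$, whereas the ray is unbounded. Since $G_v$ is nontrivial, this is a contradiction.

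This argument looks almost too easy, so the thing to verify carefully is the pointwise fixing claim. An isometry $g$ fixing $v$ and $\xi$ sends the ray $[v,\xi)$ to a ray from $gv=v$ asymptotic to $g\xi=\xi$. By uniqueness of rays from a basepoint \cite[Proposition~II.8.2]{BriHae1}, the image is the same ray, parametrized by arc length from the same start point, so it is fixed pointwise. Then any single nontrivial element of $G_v$, for instance $a_i$, fixes an unbounded set. This contradicts the fact that $\Fix(a_i)$ has diameter at most $2$. That fact is exactly what was invoked in the proof of Lemma~\ref{lem:Aut-weak-acyl}, so I would cite it the same way.

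So the proof reduces to two steps: (i) pick any vertex $v$ of $X$ and a nontrivial element $h\in\Stab_G(v)$, for example a conjugate of $a_i$; (ii) if $G$ fixed $\xi$, then $h$ would fix the ray $[v,\xi)$ pointwise, which contradicts the bounded fixed set. The main point where I expect care to be needed is making sure \cite[Corollary~2.3]{Marti1} applies to every nontrivial element, including elliptic elements of the vertex groups, and not only to some of them. If its statement turns out to be restricted, the fallback is to use weak acylindricity directly. Choose points $x,y$ on the ray with $d(x,y)\ge R$. They are fixed by the infinite cyclic group $\langle a_i\rangle$ (or a conjugate), since each power also fixes $v$ and $\xi$. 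This contradicts the bound $N$ on common stabilizers from Lemma~\ref{lem:Aut-weak-acyl}.
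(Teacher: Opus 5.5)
Your proof is correct and is essentially the paper's argument: fix a vertex $v$ and a nontrivial $g\in G_v$, use uniqueness of geodesic rays from a basepoint (\cite[Proposition~II.8.2]{BriHae1}) to see that $g$ fixes the ray from $v$ to $\xi$ pointwise, and contradict the boundedness of $\Fix(g)$ from \cite[Corollary~2.3]{Marti1}. One caveat: your fallback via Lemma~\ref{lem:Aut-weak-acyl} is not an independent route, since in the Higman-like case the paper deduces that lemma from the same \cite[Corollary~2.3]{Marti1}.
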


\begin{proof}
Assume for contradiction that $G$ fixes some point $\xi\in \partial_\infty X$.
Let $v$ be a vertex of $X$, and let $\rho\colon [0,\infty)\to X$ be the unique $\CAT(0)$ geodesic ray 
issuing from $v$ and representing $\xi$, which exists by~\cite[Proposition~II.8.2]{BriHae1}.
Fix a nontrivial element $g$ of the vertex group $G_v$. We have $gv=v$ and $g\xi=\xi$, so $g\rho$ is again a geodesic ray
issuing from $v$ and representing $\xi$. By uniqueness, we get
$g\rho=\rho$. Since $g$ preserves distance from $v$, it follows that $g$ fixes $\rho$ pointwise. Thus the fixed-point set $\Fix(g)$ contains the unbounded set $\rho([0,\infty))$.
On the other hand, since $g$ is elliptic, \cite[Corollary~2.3]{Marti1} implies that
$\Fix(g)$ is either a single vertex or a union of edges contained in the star of a
single vertex. In particular, $\Fix(g)$ is bounded. This contradiction proves the lemma.
\end{proof}

Now we can prove the acylindrical hyperbolicity of $\Aut(G)$.

\begin{prop}\label{prop:aut-ah-higman-like} 
The group $\Aut(G)$ is acylindrically hyperbolic. 
\end{prop}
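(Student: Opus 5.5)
We will apply Martin's Corollary~G to the action of $\Gamma=\Aut(G)\simeq G\rtimes F_\sigma$ on the $\CAT(0)$ square complex $X$ described in Section~\ref{sec3}. The conclusion then says that $\Aut(G)$ is either virtually cyclic or acylindrically hyperbolic. Lemma~\ref{lem:virtually-cyclic-common} rules out the first alternative. So it remains to verify that the action is non-uniformly weakly acylindrical and that hypotheses (1)--(3) hold.

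Non-uniform weak acylindricity follows at once from Lemma~\ref{lem:Aut-weak-acyl}, since weakly acylindrical actions are non-uniformly weakly acylindrical.

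For hypotheses (1) and (2) we pass from $G$ to $\Aut(G)$, using that $G\simeq\Inn(G)$ sits inside $\Aut(G)$ and that the restricted action is the original one.
\begin{itemize}
\item[(1)] Essentiality is inherited by overgroups. If some $\Aut(G)$-orbit $\Aut(G)x$ lay in the $R$-neighbourhood of a half-space $\mathfrak h$, then so would the smaller orbit $Gx\subseteq \Aut(G)x$. This contradicts Lemma~\ref{lem:g-essential}.
\item[(2)] A point of $X\cup\partial_\infty X$ fixed by $\Aut(G)$ would also be fixed by $G$. Lemma~\ref{lem:no-global-fixed-point} excludes fixed points in $X$, and Lemma~\ref{lem:g-no-fixpt-on-bdry} excludes fixed points in $\partial_\infty X$.
\end{itemize}

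Hypothesis (3), that $X$ is not a product of two unbounded trees, is the step needing an actual argument. I plan to use the fixed-point structure. Suppose $X=T_1\times T_2$ as a square complex. The link of every vertex of such a product is a complete bipartite graph. Moreover, every square lies in a flat $\ell_1\times\ell_2$, where $\ell_i$ are lines in $T_i$, and every vertex lies on one of these lines. Take a nontrivial elliptic element $g$ of a vertex group $G_v$.

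The first thing I would try is a direct local argument. Vertex links of $X$ are cosets graphs of a Baumslag--Solitar vertex group relative to its two cyclic edge subgroups. Since $\langle a_{i-1}\rangle\cap\langle a_i\rangle=1$ in $BS(1,m)$ by Britton's lemma, I would check whether such a link can be complete bipartite. I expect it cannot: two cosets $h\langle a_{i-1}\rangle$ and $k\langle a_i\rangle$ are adjacent only if they intersect, and there exist disjoint pairs.

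If that local check is awkward, the fallback is global. In a product of trees, a generator $a_i$ of an edge group that fixes an edge in one factor direction would have to act on the other factor. Its fixed set would then contain the product of a line's worth of structure, which is unbounded, or it would fail to be a subset of one vertex star. That contradicts \cite[Corollary~2.3]{Marti1} as used in Lemma~\ref{lem:g-no-fixpt-on-bdry}.

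With all hypotheses verified, Martin's Corollary~G together with Lemma~\ref{lem:virtually-cyclic-common} yields that $\Aut(G)$ is acylindrically hyperbolic.
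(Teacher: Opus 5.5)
Your proposal is correct and follows the paper's proof essentially step for step: Lemma~\ref{lem:Aut-weak-acyl}, essentiality and the absence of fixed points passed from $G$ to $\Aut(G)$, non-complete-bipartite vertex links for (3), and Lemma~\ref{lem:virtually-cyclic-common} to exclude the virtually cyclic case. The one difference is that the paper cites \cite[Corollary~4.6]{Marti1} for the links, whereas you check them directly; your check works, since in $BS(1,m)=\langle t,a\mid tat^{-1}=a^m\rangle$ the element $t^{-1}at$ (an $m$-th root of $a$) does not lie in $\langle a\rangle\langle t\rangle$, so the cosets $t^{-1}at\langle t\rangle$ and $\langle a\rangle$ are disjoint. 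Drop the global fallback, though: in a product of trees a fixed set $\Fix(g_1)\times\Fix(g_2)$ can be bounded and lie in a single vertex star, so it yields no contradiction.
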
 
\begin{proof} We check the hypotheses of Martin's Corollary G above for the action of $\Aut(G)$ on $X$. 

First, by Lemma~\ref{lem:Aut-weak-acyl}, the action of $\Aut(G)$ on $X$ is weakly acylindrical, hence, in particular, non-uniformly weakly acylindrical. 

Second, by Lemma~\ref{lem:g-essential}, the action of $G$ on $X$ is essential. Since $G \le \Aut(G)$, every $\Aut(G)$-orbit contains a
$G$-orbit. Therefore, if an $\Aut(G)$-orbit were contained in a bounded neighborhood
of a half-space, then the corresponding $G$-orbit would be as well, contradicting
essentiality. Hence the action of $\Aut(G)$ on $X$ is essential.

Third, the action of $G$ has no fixed point in $X\cup \partial_\infty X$, by Lemmas~\ref{lem:no-global-fixed-point} and~\ref{lem:g-no-fixpt-on-bdry}. If $\Aut(G)$ fixed a point of $X\cup\partial_\infty X$, then so would its subgroup $G$, again a contradiction. 

Finally, $X$ is not the product of two unbounded trees. Indeed, if a square complex splits as a product of two trees, then the link of every vertex is the simplicial join of two discrete sets, hence a complete bipartite graph. Since by~\cite[Corollary~4.6]{Marti1}, the link of a vertex of $X$ is not a complete bipartite graph, $X$ is not a product of two trees.

Thus all hypotheses of Martin's Corollary G are satisfied. It follows that $\Aut(G)$ is either virtually cyclic or acylindrically hyperbolic. The first possibility is excluded by Lemma~\ref{lem:virtually-cyclic-common}. Therefore $\Aut(G)$ is acylindrically hyperbolic. 
\end{proof}

Now we can prove Theorem~\ref{thm1} (2) for the $4$-generated case, exactly as above.
Since $\Inn(G)\simeq G$ is infinite, Proposition~\ref{prop:aut-ah-higman-like}
and Corollary~\ref{cor:aut-ah-implies-rinfty} imply that $G$ has property $R_\infty$.\qed

As a byproduct, we obtain the acylindrical hyperbolicity of generalized Higman groups and Higman-like groups.

\begin{corl}\label{cor:ah}
Let $G$ be any group $H_\sigma$ or $H(m_i)$ appearing in Theorem~\ref{thm1}.
Then $G$ is acylindrically hyperbolic.
\end{corl}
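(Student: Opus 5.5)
The plan is to deduce the acylindrical hyperbolicity of $G$ from that of $\Aut(G)$ established in Propositions~\ref{prop:aut-ah-generalized} and~\ref{prop:aut-ah-higman-like}. I will use that $G\simeq\Inn(G)$ is an infinite normal subgroup of $\Aut(G)$ (Section~\ref{subsec:aut-action}). The first and most direct route is to invoke Osin's result that an infinite $s$-normal subgroup of an acylindrically hyperbolic group is itself acylindrically hyperbolic, which is \cite[Lemma~7.2]{Osin1}, the statement already used in the proof of Corollary~\ref{cor:aut-ah-implies-rinfty}. Concretely, I will take the non-elementary acylindrical action of $\Aut(G)$ on the hyperbolic space $X'$ furnished by Propositions~\ref{prop:aut-ah-generalized} and~\ref{prop:aut-ah-higman-like}. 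Here $X'=X$ in the case $n\ge5$, while for $n=4$ it is whatever hyperbolic space Martin's Corollary G provides. Restriction to a subgroup preserves acylindricity trivially, since the bound on the number of elements moving two far-apart points a small amount only gets smaller. Osin's lemma then gives that the restricted action of $\Inn(G)$ is non-elementary, so $G$ is acylindrically hyperbolic.

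As a more self-contained alternative in the case $n\ge5$, I can argue directly with $G$ acting on $X$. By Lemma~\ref{lem:Aut-weak-acyl}, or directly by \cite[Lemma~2.7]{HorHua1}, the action of $G$ is weakly acylindrical, and \cite[Theorem~E]{MarPrz1} upgrades this to acylindricity exactly as in the proof of Proposition~\ref{prop:aut-ah-generalized}. The action has no global fixed point by Lemma~\ref{lem:no-global-fixed-point}, so by \cite[Corollary~II.2.8(1)]{BriHae1} its orbits are unbounded. Moreover $G$ is not virtually cyclic by Lemma~\ref{lem:virtually-cyclic-common}. Osin's trichotomy then yields a non-elementary acylindrical action.

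For $n=4$ the direct route is similar. I would apply Martin's Corollary G to $G$ itself on $X$. Its hypotheses hold as follows:
\begin{enumerate}
\item non-uniform weak acylindricity, from the proof of Lemma~\ref{lem:Aut-weak-acyl};
\item essentiality, from Lemma~\ref{lem:g-essential};
\item no fixed points in $X\cup\partial_\infty X$, from Lemmas~\ref{lem:no-global-fixed-point} and~\ref{lem:g-no-fixpt-on-bdry};
\item $X$ is not a product of trees, by the link argument in Proposition~\ref{prop:aut-ah-higman-like}.
\end{enumerate}
Corollary G then makes $G$ either virtually cyclic or acylindrically hyperbolic, and Lemma~\ref{lem:virtually-cyclic-common} excludes the first possibility.

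I do not expect a genuine obstacle. The only delicate point is the Osin step: one must check that Osin's notion of non-elementary coincides with the one needed for acylindrical hyperbolicity. For acylindrical actions this holds because the quasi-parabolic case is excluded, as noted in the Remark in Section~\ref{sec2}. I would present the short Osin-based argument as the proof and mention the direct verification as a remark.
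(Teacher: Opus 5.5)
Your proposal is correct. Its main route has the same overall shape as the paper's proof: restrict a non-elementary acylindrical action of $\Aut(G)$ to $\Inn(G)\simeq G$, note that acylindricity passes to subgroups, and then check non-elementarity. The two arguments differ only in that last step.

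The paper uses that $\Inn(G)$ has \emph{finite index} in $\Aut(G)\simeq G\rtimes F_\sigma$. Suitable positive powers of two independent loxodromic elements of $\Aut(G)$ therefore lie in $\Inn(G)$ and are still independent loxodromics, which is an entirely elementary observation. You use only that $\Inn(G)$ is infinite and normal, hence $s$-normal, and invoke \cite[Lemma~7.2]{Osin1} as in the proof of Corollary~\ref{cor:aut-ah-implies-rinfty}. This relies on a heavier black box, but it does not need finiteness of $F_\sigma$. It therefore shows in general that $G/Z(G)$ is acylindrically hyperbolic whenever $\Aut(G)$ is and $\Inn(G)$ is infinite.

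Your direct alternative is also valid, and is in fact the most economical. Every ingredient it needs is established in Sections~\ref{sec3} and~\ref{sec4} for $G$ first and only afterwards transferred to $\Aut(G)$: weak acylindricity, absence of fixed points in $X$ and $\partial_\infty X$, essentiality, and non-virtual-cyclicity. So applying \cite[Theorem~E]{MarPrz1} with Osin's trichotomy, respectively Martin's Corollary G, to $G$ itself proves the corollary. It does so without using the computations of $\Aut(G)$ from \cite{HorHua1} and \cite{Marti1}. The paper's route suits its presentation of the statement as a byproduct of the results on $\Aut(G)$.
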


\begin{proof}
We proved in Propositions~\ref{prop:aut-ah-generalized} and
\ref{prop:aut-ah-higman-like} that $\Aut(G)$ is acylindrically hyperbolic.
Thus $\Aut(G)$ admits a non-elementary acylindrical action on a Gromov-hyperbolic
space $Y$. Since $G\simeq\Inn(G)$ is a finite-index subgroup of $\Aut(G)$,
the restricted action of $\Inn(G)$ on $Y$ is still acylindrical, with the same
acylindricity constants. Moreover, the restricted action is still non-elementary:
if $g,h\in\Aut(G)$ are independent loxodromic elements for the action on $Y$,
then suitable positive powers of $g$ and $h$ lie in $\Inn(G)$, and these powers
are still independent loxodromic elements. Hence $\Inn(G)$ is acylindrically
hyperbolic. Since $G\simeq\Inn(G)$, the group $G$ is acylindrically hyperbolic.
\end{proof}

\begin{rem}
For the classical Higman group $H_4$, acylindrical hyperbolicity was proved earlier by
Minasyan--Osin~\cite{MinOsi1}.
For the classical groups $H_n$, $n\ge 5$,
acylindrical hyperbolicity was essentially established by Martin, who proved that the
natural action of $H_n$ on $X$ is acylindrical~\cite[Corollary~5.2]{Marti2}.
\end{rem}

\section{Concluding remarks}\label{sec:remarks}

In the course of this work, our original aim was only to produce a non-elementary
action of $\Inn(G)$ on a Gromov-hyperbolic space, since this is the input needed
for Delzant's lemma and hence for property $R_\infty$. However, the
geometric arguments available to us led to the stronger conclusion that
$\Aut(G)$ is acylindrically hyperbolic. This stronger property is thus not the
minimal hypothesis required for our application, but rather the natural
output of the proof.
It is therefore instructive to compare various related properties that imply property $R_\infty$ for a group $G$:
\begin{enumerate}
\item $G$ has property $R_\infty$;
\item there exists a nontrivial $\Aut(G)$-invariant homogeneous quasimorphism on $G$;
\item there exists an isometric action of $\Aut(G)$ on a geodesic Gromov-hyperbolic space whose
restriction to $\Inn(G)$ is non-elementary;
\item $\Aut(G)$ is acylindrically hyperbolic and $\Inn(G)$ is infinite.
\end{enumerate}
We have the following implications:
\[
(4)\Longrightarrow (2)\Longrightarrow (1)
\qquad\text{and}\qquad
(4)\Longrightarrow (3)\Longrightarrow (1).
\]
Here  $(4)\Longrightarrow(2)$ follows from \cite[Theorem~E]{FFaWad1}, $(2)\Longrightarrow(1)$ is~\cite[Corollary~8.1.4]{FFIMSW1}, $(4)\Longrightarrow(3)$ follows from~\cite[Lemma~7.2]{Osin1}, and $(3)\Longrightarrow(1)$ is our Proposition~\ref{prop:delzant} based on Delzant's lemma. 

It is natural to ask whether property (3) implies (2). Some evidence for the plausibility of such an implication comes
from the proof of Delzant's lemma in~\cite{SorVas1}, where we show that for
certain elements of the form $uc^n$, the stable translation lengths $\tau(uc^n)$ grow
linearly in $n$. This mirrors the kind of linear growth that a homogeneous
quasimorphism would detect on such elements. We state this question as follows.

\begin{quest}
Let $G$ be a group such that $\Aut(G)$ admits an isometric action on a geodesic
Gromov-hyperbolic space whose restriction to $\Inn(G)$ is non-elementary.
Must $G$ admit a nontrivial $\Aut(G)$-invariant homogeneous quasimorphism?
\end{quest}

For completeness we show that all other implications do not hold.

$(1)\centernot\Longrightarrow(2)$: Let $G=N_{2,4}$ be the free nilpotent group of rank $2$ and class $4$. By~\cite{DekGon1},
 $G$ has property $R_\infty$. Since $G$ is amenable, every homogeneous quasimorphism on it is a
homomorphism, see~\cite[Proposition~2.65]{Caleg1}. But any $\Aut(G)$-invariant homomorphism
$h\colon G\to\R$ must vanish. Indeed, $h$ factors through the
abelianization $G^{ab}\simeq\Z^2$, and the automorphism
sending each free generator to its inverse induces $-I$ on $\Z^2$.
Thus the induced homomorphism $\bar h\colon\Z^2\to\R$ satisfies
$\bar h=\bar h\circ(-I)$, hence $\bar h=-\bar h$, so $\bar h=0$. Thus $G$ has property $R_\infty$ but
admits no nontrivial $\Aut(G)$-invariant homogeneous quasimorphisms.

$(1)\centernot\Longrightarrow(3)$: Consider $G=N_{2,4}$, as above. Then $\Inn(G)\simeq G/Z(G)$ is nilpotent as well. If $\Inn(G)$ admitted a non-elementary action on a geodesic Gromov-hyperbolic space, then it would contain two loxodromic elements generating a free non-abelian subgroup; see, for example,~\cite[Proposition~28]{SorVas1}. Since a nilpotent group cannot contain a free non-abelian subgroup, no such action exists. Thus property~(3) fails for $G$.

$(1)\centernot\Longrightarrow(4)$ follows from $(1)\centernot\Longrightarrow(2)$ and $(4)\Longrightarrow(2)$.

$(2)\centernot\Longrightarrow(3)$: Let $G=BS(1,m)=\langle a,b\mid b^{-1}ab=a^m\rangle$, with $|m|>1$. In~\cite[Section~2]{Levit1} Levitt defines for arbitrary generalized Baumslag--Solitar groups the modular homomorphism $\Delta\colon G\to\mathbb Q^*$ and observes that $\Delta\circ\alpha=\Delta$ for every $\alpha\in\Aut(G)$; moreover, for $G=BS(1,m)$ the image of $\Delta$ is generated
by $1/m$. In particular, $\chi\colon G\to\Z$ defined by $\chi(g):=-\log_{|m|}|\Delta(g)|$,
is a nonzero homomorphism $G\to\Z$, hence a nontrivial homogeneous
quasimorphism, and it is $\Aut(G)$-invariant, so property~(2) holds for $G$. If property~(3) held for $G$, then $\Inn(G)$ would admit an action on a geodesic Gromov-hyperbolic space with two independent loxodromic elements. Again, by \cite[Proposition~28]{SorVas1}, sufficiently large powers of these two loxodromics would generate a free non-abelian subgroup of $\Inn(G)$. Since $G$ and 
$\Inn(G)\simeq G/Z(G)$ are solvable, this is impossible. Thus property~$(3)$ fails for $G$.

$(2)\centernot\Longrightarrow(4)$ follows from $(2)\centernot\Longrightarrow(3)$ and $(4)\Longrightarrow(3)$. Another example is given by $G=F_2\times F_2$: this follows from~\cite[Theorem~5.5]{FFaWad1} and~\cite[Theorem~1.5]{Genev1}, as pointed out in~\cite[Section~6.1]{FFaWad1}.

$(3)\centernot\Longrightarrow(4)$: Let $G=H_4\times \Z^2$, where $H_4$ is the $4$-generator Higman group. Since $Z(H_4)=1$, we have $Z(G)=\Z^2$ and therefore $\Inn(G)\simeq G/Z(G)\simeq H_4$. As the center is a characteristic subgroup, every automorphism of $G$ preserves $Z(G)$, so there is a natural homomorphism $\rho\colon\Aut(G)\to \Aut(G/Z(G))\times \Aut(Z(G))\simeq\Aut(H_4)\times GL_2(\Z)$. Its kernel consists exactly of those automorphisms $\varphi$ which induce identity on $G/Z(G)$ and on $Z(G)$, i.e.\ automorphisms of the form $\varphi(h,z)=(h,z+\lambda(h))$, for some map $\lambda\colon H_4\to\Z^2$. Moreover,  the condition that $\varphi$ is a homomorphism is equivalent to $\lambda$ being a homomorphism itself. Since $H_4^{ab}$ is trivial, $\Hom(H_4,\Z^2)=0$, hence $\lambda=0$, which means that $\ker\rho=\id$. Every pair $(\alpha,A)\in \Aut(H_4)\times GL_2(\Z)$ gives an automorphism of $G$, sending $(h,z)\mapsto(\alpha(h),Az)$, so $\rho$ is onto, and hence $\Aut(G)\simeq \Aut(H_4)\times GL_2(\Z)$. 
Let $Y$ be a geodesic Gromov-hyperbolic space on which $\Aut(H_4)$ acts
non-elementarily and acylindrically, whose existence follows from
Proposition~\ref{prop:aut-ah-higman-like}. Let $\Aut(G)$ act on $Y$ via
the projection $\Aut(G)\to\Aut(H_4)$.
Then the restriction of this action to $\Inn(G)$ is non-elementary by Proposition~\ref{prop:aut-ah-higman-like} and \cite[Lemma~7.2]{Osin1}, so property (3) holds for $G$. On the other hand, $\Aut(G)$ is a direct product of two infinite groups, hence it is not acylindrically hyperbolic by~\cite[Corollary~7.3\,(b)]{Osin1}. Thus $(3)\centernot\Longrightarrow(4)$.

We also show that the condition of $\Inn(G)$ being infinite in (4) is unavoidable. Let $G=\Z^2$. Then $\Inn(G)=1$, while
$\Aut(G)\simeq GL_2(\Z)$. Since $GL_2(\Z)$ is virtually free and not virtually cyclic, it is a non-elementary hyperbolic group, hence acylindrically hyperbolic by~\cite[Theorem~1.2]{Osin1}. On the other hand, $G$ does not have property $R_\infty$: for the automorphism $\varphi=-\id_{\Z^2}$, the group $G$ being abelian implies that the $\varphi$-twisted conjugacy classes are the cosets of $(1-\varphi)G=2\Z^2$, and hence $R(\varphi)=\bigl|\Z^2:2\Z^2\bigr|=4$.
Therefore the assumption that $\Inn(G)$ be infinite is essential in~(4).


\frenchspacing
\begin{thebibliography}{FFIMSW26}

\bibitem[BF02]{BesFuj1}
{\bf M Bestvina, K Fujiwara,}
{\it Bounded cohomology of subgroups of mapping class groups,}
Geom. Topol. 6 (2002), 69--89.

\bibitem[BH99]{BriHae1}
{\bf M\,R Bridson, A Haefliger,}
{\it Metric spaces of non-positive curvature,}
Grundlehren der mathematischen Wissenschaften, 319. Springer-Verlag, Berlin, 1999. xxii+643 pp. ISBN: 3-540-64324-9.

\bibitem[Cal09]{Caleg1}
{\bf D Calegari,}
{\it scl,}
MSJ Memoirs, vol.~20,
Mathematical Society of Japan, Tokyo, 2009.

\bibitem[CS22]{CalSor1}
{\bf M Calvez, I Soroko,}
{\it Property $R_\infty$ for some spherical and affine Artin--Tits groups,}
J. Group Theory 25 (2022), no. 6, 1045--1054.

\bibitem[CS11]{CapSag1}
{\bf P-E Caprace, M Sageev,}
{\it Rank rigidity for $\CAT(0)$ cube complexes,} 
Geom. Funct. Anal. 21 (2011), no. 4, 851--891.

\bibitem[Che00]{Chepo1}
{\bf V Chepoi,}
{\it Graphs of some $\CAT(0)$ complexes,}
Adv. Appl. Math. 24 (2000), 125--179.

\bibitem[DG14]{DekGon1}
{\bf K Dekimpe, D Gon\c{c}alves,}
{\it The $R_\infty$ property for free groups, free nilpotent groups and free solvable groups,}
Bull. London Math. Soc. 46 (2014), no. 4, 737--746.

\bibitem[DL24]{DekLat1}
{\bf K Dekimpe, M Lathouwers,}
{\it $R_\infty$-property for finitely generated torsion-free $2$-step nilpotent groups of small Hirsch length,}
Topology Appl. 359 (2025) Paper No. 109084, 12 pp.

\bibitem[DLR26]{DeLiRe1}
{\bf K Dekimpe, P\,M\,Lins\,de\,Araujo, Y\,S Rego,}
{\it Cohomological and quasi-isometric diversity of groups with  property $R_\infty$,}
preprint, 2026, arXiv:2602.17411.

\bibitem[FGD10]{FeGoDa1}
{\bf A Fel'shtyn, D\,L Gon\c{c}alves,}
{\it Twisted conjugacy classes in symplectic groups, mapping class groups and braid groups,} with an appendix written jointly with Francois Dahmani. Geom. Dedicata 146 (2010), 211--223.

\bibitem[FN16]{FelNas1} 
{\bf A Fel'shtyn, T Nasybullov,} 
{\it The $R_{\infty}$ and $S_{\infty}$ properties for linear algebraic groups,}
J. Group Theory 19 (2016), no. 5, 901--921.

\bibitem[FT15]{FelTro2} 
{\bf A Fel'shtyn, E Troitsky,}
{\it Aspects of the property $R_{\infty}$,}
J. Group Theory 18 (2015), no. 6, 1021--1034.

\bibitem[FFIMSW26]{FFIMSW1}
{\bf F Fournier-Facio, H Iveson, A Martino, W Sgobbi, P Wong,} 
{\it Property $R_\infty$ for groups with infinitely many ends,}
Geom. Dedicata 220 (2026), article number 25.

\bibitem[FFW23]{FFaWad1}
{\bf F Fournier-Facio, R\,D Wade,}
{\it Aut-invariant quasimorphisms on groups,}
Trans. Amer. Math. Soc., 376 (10) (2023), 7307--7327.

\bibitem[Gen24]{Genev1}
{\bf A Genevois,}
{\it Automorphisms of graph products of groups and acylindrical hyperbolicity},
Mem. Amer. Math. Soc. 301 (2024), no.~1509, 127 pp.

\bibitem[Gro87]{Gromo1}
{\bf M Gromov,}
{\it Hyperbolic groups,} 
Essays in group theory, 75--263,
Math. Sci. Res. Inst. Publ., 8, Springer, New York, 1987.

\bibitem[Hag14]{Hage1}
{\bf M F Hagen,}
{\it Weak hyperbolicity of cube complexes and quasi-arboreal groups,}
J. Topol., 7 (2014), no.~2, 385--418.

\bibitem[Hig51]{Higma1}
{\bf G Higman,}
{\it A finitely generated infinite simple group,}
J. London Math. Soc. 26 (1951), 61--64.

\bibitem[HH25]{HorHua1}
{\bf C Horbez, J Huang,}
{\it Measure equivalence rigidity among the Higman groups,}
J. Eur. Math. Soc. (2025), published online first.
\href{https://doi.org/10.4171/jems/1614}{doi:10.4171/jems/1614}

\bibitem[Lev07]{Levit1}
{\bf G Levitt,}
{\it On the automorphism group of generalized Baumslag--Solitar groups,}
Geom. Topol. 11 (2007), 473--515.

\bibitem[LL00]{LevLus1} 
{\bf G Levitt, M Lustig,}
{\it Most automorphisms of a hyperbolic group have very simple dynamics,}
Ann. Sci. \'Ecole Norm. Sup. (4) 33 (2000), no. 4, 507--517.

\bibitem[Mar17]{Marti1}
{\bf A Martin,}
{\it On the cubical geometry of the Higman group,}
Duke Math. J. 166 (2017), no. 4, 707--738.

\bibitem[Mar21]{Marti2}
{\bf A Martin,}
{\it Acylindrical actions on $\CAT(0)$ square complexes,}
Groups Geom. Dyn. 15 (2021), no. 1, 335--369.

\bibitem[MP22]{MarPrz1}
{\bf A Martin, P Przytycki,}
{\it Acylindrical actions for two-dimensional Artin groups of hyperbolic type,}
Int. Math. Res. Not. IMRN 2022, no. 17, 13099--13127.

\bibitem[MO15]{MinOsi1}
{\bf A Minasyan, D Osin,}
{\it Acylindrical hyperbolicity of groups acting on trees,}
Math. Ann. 362 (2015), no.~3--4, 1055--1105.

\bibitem[Osi16]{Osin1}
{\bf D Osin,}
{\it Acylindrically hyperbolic groups,}
Trans. Amer. Math. Soc. 368 (2016), no.~2, 851--888.

\bibitem[SV24]{SorVas1}
{\bf I Soroko, N Vaskou,}
{\it Property $R_\infty$ for new classes of Artin groups,}
preprint, 2024, arXiv:2409.18123v2.

\bibitem[Tro25]{Troit1}
{\bf E Troitsky,}
{\it Twisted conjugacy in residually finite groups of finite Prüfer rank,}
J. Group Theory 28 (2025), no. 1, pp. 151--164.

\bibitem[Wit23]{Witdo1}
{\bf T Witdouck,}
{\it The $R_\infty$-property for right-angled Artin groups and their nilpotent quotients,}
preprint, 2023, arXiv:2304.01077.

\end{thebibliography}
\end{document}